\begin{document}
	
	\markboth{L.~Feng AND Z.~Huang}{Petrov-Galerkin method for turning point 
	problems}
	
	\title{A Uniform Convergent Petrov-Galerkin method for a Class of Turning 
	Point Problems\footnote{This work was partially supported by NSFC Projects No.~11871298, 
		~12025104.}
}
	
	\author{Li Feng
		\thanks{Department of Mathematical Sciences, Tsinghua University, Beijing, China 
		\\ 
		Email: feng-l18@mails.tsinghua.edu.cn}
		\and
		Zhongyi Huang\footnote{Corresponding author.}
		\thanks{Department of Mathematical Sciences, Tsinghua University, Beijing, China 
		\\ 
		Email: zhongyih@mail.tsinghua.edu.cn}}
	
	\maketitle

	\captionsetup[figure]{labelsep=space}
	\captionsetup[table]{labelsep=space}
	\begin{abstract}
		In this paper, we propose a numerical method for turning point problems in 
		one dimension based on Petrov-Galerkin finite element method (PGFEM).
		We first give a priori estimate for the turning point problem with a single 
		boundary turning point. Then we use PGFEM to solve it,
		where test functions are the solutions to piecewise approximate dual 
		problems.
		We prove that our method has a first-order convergence rate in both 
		$L^\infty$ norm and an energy norm when we select the exact 
		solutions to dual problems as 
		test functions.
		Numerical results show that our scheme is efficient for turning 
		point problems with different types of singularities, and the 
		convergency coincides with our theoretical results.
	\end{abstract}

	\begin{classification}
		65N06, 65B99.
	\end{classification}

	\begin{keywords}
		Turning point problem, Petrov-Galerkin finite element 
		method, Uniform convergency
	\end{keywords}

	\section{Introduction}\label{sec:1}
	Singularly perturbed problems have been widely studied in the fields of 
	fluid mechanics, aerodynamics, convection-diffusion process, etc.
	In such problems, there exist boundary layers or interior layers because a 
	small parameter is included in the coefficient of the highest derivative.
	Consider the following singularly perturbed turning point problem 
	in one dimension:
	\begin{equation}
		\label{eq1}
		\left\{
		\begin{aligned}
			& Lu=-\varepsilon u''+p(x)u'+b(x)u=f(x),\quad x_L<x<x_R,\\
			& u(x_L) = u_L, \quad u(x_R) = u_R,\\
		\end{aligned}\right.
	\end{equation}
	where $p(x)$ has zeros $z_1<z_2<\cdots<z_m$ on $[x_L,x_R]$. We 
	assume $p$, 
	$b$, and $f$ 
	to be sufficiently smooth. Furthermore, we suppose
	\begin{equation}
		b(x)-p'(x)\geq 
		\gamma_0>0
		\label{eq:condition}
	\end{equation}
	to ensure the well-posedness of the dual problems. Each zero 
	of $p(x)$ is presumed to be a single root, i.e., $p'(z_i)\neq0$.
	
	From the asymptotic analysis, we know that there will be boundary/interior 
	layers at some of $z_i$'s.
	Here we consider the following types of singularities:
	\begin{enumerate}
		\item[(a)] Exponential boundary layers (singularly perturbed 
		problems without turning points);
		\item[(b)] Cusp-like interior layers (interior turning point 
		problems);
		\item[(c)] Boundary layers of other types (boundary turning point 
		problems).
	\end{enumerate}
	
	Singularly perturbed elliptic equations without turning 
	points have been widely studied by researchers.
	Various numerical methods are utilized, where finite 
	difference methods and finite element methods play prominent roles. 
	El-Mistikawy and Werle raise an 
	exponential box scheme \cite{el1978numerical}(EMW scheme) in order to solve 
	Falkner-Skan equations.  
	Kellogg, Berger, and Han\cite{berger1984priori}, Riordan and
	Stynes\cite{stynes1986finite}, etc., find this EMW scheme 
	efficient when solving singularly perturbed elliptic equations.
	Fitted operator numerical methods, such as exponentially 
	fitted finite difference method and Petrov-Galerkin method, are developed. 
	Another class of methods, fitted mesh 
	methods\cite{guo1993uniformly, sun1995finite, hemker2000varepsilon, 
		li2000convergence, de2011parameter}, show good adaptivity to 
	different problems, while remeshing is necessary in some moving front 
	problems.
	
	A turning point problem is a class of equations in which the 
	coefficient $p(x)$ vanishes in the domain. Compared to singularly 
	perturbed equations without turning points, interior 
	layers and other types of boundary layers might appear in the solutions to 
	turning point problems. 
	O'Malley\cite{o1970boundary} 
	and Abrahamson\cite{abrahamson1977priori} analyze turning point problems 
	in some common cases.
	Kellogg, Berger, and Han\cite{berger1984priori} theoretically examine 
	turning point problems with single interior turning points, and they use a 
	modified EMW scheme, which obtains a first-order (or lower) convergence 
	rate.
	Stynes and Riordan \cite{stynes1986finite,stynes19871} build a 
	numerical scheme under Petrov-Galerkin framework and prove the uniform 
	convergence 
	in $L^1$ norm and $L^{\infty}$ norm.
	Farrell\cite{farrell1988sufficient} 
	proposes sufficient conditions for an exponentially fitting difference 
	scheme to be uniformly convergent for a turning point problem. 
	Farrell and Gartland\cite{farrell1988uniform} modify the EMW scheme and 
	construct a 
	scheme 
	with uniform first-order convergency, where parabolic cylinder functions 
	are used in the computation. For other studies of turning point problems 
	using fitted operator methods, please refer to \cite{roos1990global, 
		vulanovic1990numerical,geng2014numerical,munyakazi2019robust};
	for fitted 
	mesh methods, please refer to \cite{sun1994finite, 
		natesan2003parameter,chen2008stability,o2011parameter,
		o2012singularly,kumar2019parameter,yadav2021almost}.
	
	We notice that most of the present research 
	assume turning points to be away from the boundary. If a turning point 
	meets an endpoint, the problem is called \textit{a boundary turning point 
		problem}, 
	which has 
	not been thoroughly studied.
	In \cite{vulanovic1987non}, Vulanovi{\'c} considers a turning point problem 
	with an arbitrary single turning point and obtains uniform convergency 
	using finite difference method on a non-equidistant mesh.
	Vulanovi{\'c} and Farrell\cite{vulanovic1993continuous} examine a 
	multiple boundary turning point problem and make priori estimates. 
	However, estimates for 
	single boundary turning point problems and numerical methods based on the 
	uniform mesh are not given yet.
	In order to fill this blank, in this paper we estimate the derivatives of 
	the solution to a standard single boundary turning point problem and raise 
	an algorithm without particular mesh generation.
	
	Petrov-Galerkin finite element method (PGFEM) is used in many 
	problems. Dated 
	back to 1979, Hemker and Groen\cite{de1979error} raise a method that treats 
	problem (1) with Petrov-Galerkin method, where the coefficient $p(x)$ has a 
	positive
	lower bound. The scheme in Farrell and Gartland \cite{farrell1988uniform} 
	is 
	based on the so-called patched function method, also interpreted as 	
	Petrov-Galerkin method. In references 
	\cite{broersen2014robust,chakraborty2020optimal}, Petrov-Galerkin method 
	and discontinuous 
	Petrov-Galerkin method are implemented in elliptic equations in two 
	dimensions, demonstrating their efficiency and convergency.
	
	Tailored Finite Point Method (TFPM) is raised by Han, Huang, and 
	Kellogg\cite{han2008tailored}, which is designed to solve PDEs using 
	properties of the solutions, especially for singularly perturbed 
	problems.
	TFPM could handle exponential singularities well, while simple difference 
	methods might sometimes suffer from a low convergence rate. 
	TFPM is later utilized in interface problems\cite{huang2009tailored},
	steady-state reaction-diffusion equations\cite{han2010tailored}, 
	convection-diffusion-reaction equations\cite{han2013tailored}, etc.
	
	This study presents a numerical scheme to solve problem (1) with 
	several types of singularities. We prove that the width of the boundary 
	layer of a single boundary turning point problem is 
	$O(\sqrt{\varepsilon})$, which is a weaker version of the result in 
	\cite{vulanovic1993continuous}. The derivative of the solution, $u'$, is 
	bounded by $C(1+\varepsilon^{-1/2})$ near the boundary layer and 
	bounded by $C(1+x^{-1})$ away from the layer.
	
	The rest of this paper is organized as follows.
	In section 2, priori estimates for continuous problems will be shown in 
	each case. We use PGFEM to solve
	problem \eqref{eq1}, where we choose (either exact or approximate) 
	solutions to 
	dual problems as test functions.
	We show details related to the numerical implementation in section 3. 
	Numerical results demonstrate our scheme's efficiency 
	and the uniform first-order convergency in section 4. 
	Finally, we give a brief conclusion in section 5.
	\section{A Priori estimate}\label{sec:2}
	In this section, we will present some priori estimates for cases 
	(a)(b)(c) respectively.	First we briefly recall some results from previous 
	work for cases (a) and (b). We will prove our estimates for case (c) 
	later. 
	\subsection{Exponential boundary layer}
	Suppose the velocity coefficient $p(x)\geq p_0>0$ (or otherwise, it has a 
	negative upper bound). Equation \eqref{eq1} is now written as:
	\begin{equation}
		\label{eq1a}
		\left\{
		\begin{aligned}
			& Lu\equiv -\varepsilon u''+p(x)u'+b(x)u=f(x),\quad -1<x<1,\\
			& u(-1) = u_L, \quad u(1) = u_R,\\
			& p(x)\geq p_0>0,\quad b(x)\geq b_0\geq 0.\\
		\end{aligned}\right.
	\end{equation}
	The solution to \eqref{eq1a} admits a boundary layer at $x=1$ (at $x=-1$ if 
	$p(x)\leq p_0<0$), and it is 
	shown\cite{kellogg1978analysis, berger1984priori}	that the following 
	estimates 
	hold:
	\begin{equation}
		\label{estimate1a}
		|u^{(k)}(x)|\leq C\Big(1+\varepsilon^{-k}\exp(-\frac{\eta 
			(1-x)}{\varepsilon})\Big),\quad x\in(-1,1),\quad k=0,1,2,\cdots,
	\end{equation}
	where $C,\eta$ are positive constants independent of $\varepsilon$. We have 
	the following property at once:
	\begin{proposition}
		\rm\label{prop:exp}
		Suppose $u$ is the solution to \eqref{eq1a} and $p(x)$ is lower 
		bounded, then there exists a constant $C$ 
		independent of $\varepsilon$, such that
		\begin{equation}
			\label{eq:prop:exp}
			|(1-x)u'(x)|\leq C,\quad \forall x\in(-1,1).
		\end{equation}
	\end{proposition}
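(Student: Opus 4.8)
The plan is to derive the bound directly from the pointwise derivative estimate \eqref{estimate1a} with $k=1$, which gives
\begin{equation}
	\label{eq:plan:deriv}
	|u'(x)|\leq C\Big(1+\varepsilon^{-1}\exp\big(-\tfrac{\eta(1-x)}{\varepsilon}\big)\Big),\quad x\in(-1,1).
\end{equation}
Multiplying through by $(1-x)$, it suffices to control the two resulting terms: $(1-x)\cdot C$ and $C(1-x)\varepsilon^{-1}\exp(-\eta(1-x)/\varepsilon)$. The first term is trivially bounded by $2C$ on $(-1,1)$ since $0<1-x<2$, so the whole question reduces to bounding the second, layer-type term uniformly in both $x$ and $\varepsilon$.

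For the second term I would substitute $t=(1-x)/\varepsilon\geq 0$, so that
\begin{equation}
	\label{eq:plan:subst}
	(1-x)\,\varepsilon^{-1}\exp\Big(-\tfrac{\eta(1-x)}{\varepsilon}\Big)=t\,e^{-\eta t}.
\end{equation}
The function $g(t)=t e^{-\eta t}$ on $[0,\infty)$ is nonnegative, vanishes at $t=0$ and as $t\to\infty$, and attains its maximum at $t=1/\eta$, where $g(1/\eta)=1/(\eta e)$. Hence $t e^{-\eta t}\leq 1/(\eta e)$ for all $t\geq 0$, a bound independent of $\varepsilon$. This is the crux: the explicit factor $(1-x)$ exactly cancels the extra $\varepsilon^{-1}$ coming from the first derivative in the layer, turning a blow-up into a bounded quantity.

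Combining the two pieces, $|(1-x)u'(x)|\leq 2C + C/(\eta e)$, which is a constant independent of $\varepsilon$, yielding \eqref{eq:prop:exp}. I do not anticipate a genuine obstacle here, since the result is an immediate corollary of the known estimate \eqref{estimate1a}; the only point requiring care is verifying that the single maximum of $t e^{-\eta t}$ controls the layer term uniformly, and tracking that all constants remain independent of $\varepsilon$ (which they do, as both $C$ and $\eta$ in \eqref{estimate1a} are $\varepsilon$-independent by hypothesis). The argument extends verbatim to the symmetric case $p(x)\leq p_0<0$ with the layer at $x=-1$, replacing $(1-x)$ by $(1+x)$.
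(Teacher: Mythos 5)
Your argument is correct and is precisely the intended one: the paper states Proposition \ref{prop:exp} as following ``at once'' from \eqref{estimate1a} without writing out a proof, and your computation --- multiplying the $k=1$ estimate by $(1-x)$ and bounding $t e^{-\eta t}\leq 1/(\eta e)$ after the substitution $t=(1-x)/\varepsilon$ --- is the standard verification of that immediate corollary, with all constants visibly independent of $\varepsilon$.
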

	\subsection{Cusp-like interior layer}
	Suppose there is only one turning point $x=0$, and equation 
	\eqref{eq1} reads:
	\begin{equation}
		\label{eq1b}
		\left\{
		\begin{aligned}
			& Lu\equiv-\varepsilon u''+p(x)u'+b(x)u=f(x),\quad -1<x<1,\\
			& u(-1) = u_L,\quad u(1) = u_R,\\
			& p(0)=0, p'(0)<0,|p'(x)|\geq\frac{1}{2}|p'(0)|,\quad b(x)\geq b_0> 
			0.\\
		\end{aligned}\right.
	\end{equation}
	In some papers, $x=0$ is called \emph{an attractive turning point} because 
	flows on both sides are toward the turning point, and \eqref{eq1b} is 
	called \emph{an attractive turning point problem}. Such problems are
	characterized by the parameter $\lambda=-b(0)/p'(0)$. It is 
	shown\cite{abrahamson1977priori,berger1984priori} 
	that the solution has an interior layer when $\lambda\in(0,1]$, and
	the following estimates hold:
	\begin{equation}
		\label{estimate1b}
		|u^{(k)}(x)|\leq C\big(|x|+\sqrt{\varepsilon}\big)^{\lambda-k},\quad 
		x\in 
		(-1,1),\quad k=0,1,2,\cdots.
	\end{equation}
	Similar to the previous case, the first derivative of the solution turns 
	out bounded after multiplying a factor $x$:
	\begin{proposition}
		\rm\label{prop:cusp}
		Suppose $u$ is the solution to \eqref{eq1b}, then there exists a 
		constant $C$ independent of $\varepsilon$, such that
		\begin{equation}
			\label{eq:prop:cusp}
			|xu'(x)|\leq C,\quad \forall x\in(-1,1),
		\end{equation}
		on the assumption that $\lambda\in(0,1]$.
	\end{proposition}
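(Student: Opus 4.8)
The plan is to read the bound directly off the a priori derivative estimate \eqref{estimate1b} specialized to $k=1$ and then absorb the algebraic factor $x$ into the singular base $|x|+\sqrt{\varepsilon}$. Taking $k=1$ in \eqref{estimate1b} gives $|u'(x)|\le C(|x|+\sqrt{\varepsilon})^{\lambda-1}$ for all $x\in(-1,1)$, with $C$ independent of $\varepsilon$. Multiplying by $|x|$ yields
\begin{equation*}
|xu'(x)|\le C\,|x|\,(|x|+\sqrt{\varepsilon})^{\lambda-1}.
\end{equation*}

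First I would bound the factor $|x|$ by the whole base. Since $\sqrt{\varepsilon}\ge 0$ we have $0\le|x|\le|x|+\sqrt{\varepsilon}$, and therefore
\begin{equation*}
|x|\,(|x|+\sqrt{\varepsilon})^{\lambda-1}\le(|x|+\sqrt{\varepsilon})\,(|x|+\sqrt{\varepsilon})^{\lambda-1}=(|x|+\sqrt{\varepsilon})^{\lambda}.
\end{equation*}
This step does not require knowing the sign of the exponent $\lambda-1$; it uses only the pointwise inequality $|x|\le|x|+\sqrt{\varepsilon}$, so the same chain holds uniformly on $(-1,1)$, including at the apparent singularity $x=0$, where $|xu'(x)|$ is in fact controlled by $(\sqrt{\varepsilon})^{\lambda}$ and hence causes no trouble.

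Finally I would use the hypothesis $\lambda\in(0,1]$ to convert the remaining power into a constant. On the domain we have $|x|<1$, and since $\varepsilon$ is a small parameter (so $\varepsilon\le 1$) the base satisfies $|x|+\sqrt{\varepsilon}\le 2$; because $0<\lambda\le 1$, the map $t\mapsto t^{\lambda}$ is increasing and bounded on $[0,2]$, giving $(|x|+\sqrt{\varepsilon})^{\lambda}\le 2^{\lambda}\le 2$. Combining the three displays yields $|xu'(x)|\le 2C$, which is the claimed bound with a constant independent of $\varepsilon$.

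There is no genuine obstacle here: the proposition is an immediate corollary of \eqref{estimate1b}, and the only points deserving a word of care are (i) that the factor $|x|$ cancels exactly one unit of the singular exponent, lowering $(|x|+\sqrt{\varepsilon})^{\lambda-1}$ to the benign power $(|x|+\sqrt{\varepsilon})^{\lambda}$, and (ii) that the constraint $\lambda\le 1$ is precisely what keeps this final power bounded on the fixed domain. If one prefers not to assume $\varepsilon\le 1$, the argument goes through verbatim after replacing the bound $2$ by $(1+\sqrt{\varepsilon_0})^{\lambda}$ for any a priori upper bound $\varepsilon_0$ on $\varepsilon$.
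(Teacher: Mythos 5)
Your argument is correct and is exactly the route the paper intends: Proposition~\ref{prop:cusp} is stated as an immediate corollary of \eqref{estimate1b} with $k=1$, and your absorption of the factor $|x|$ into the base $|x|+\sqrt{\varepsilon}$ together with the boundedness of $t\mapsto t^{\lambda}$ on a fixed interval for $\lambda\in(0,1]$ is the computation the paper leaves implicit. Nothing further is needed.
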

	If $p'(0)>0$, the problem is also called \emph{a repulsive turning point 
		problem}, and its solution is smooth near the turning point. Thus we 
		need 
	no 
	additional treatment when dealing with such turning points. 
	\subsection{Boundary turning point problem}
	Consider the turning point is positioned at an endpoint. We set the 
	interval as $[0,1]$, and \eqref{eq1} becomes:
	\begin{equation}
		\label{eq1c}
		\left\{
		\begin{aligned}
			& Lu=-\varepsilon u''+p(x)u'+b(x)u=f(x),\quad 0<x<1,\\
			& u(0) = u_L,\quad u(1) = u_R,\\
			& p(0)=0,|p'(x)|\geq\frac{1}{2}|p'(0)|,\quad b(x)\geq b_0> 0.\\
		\end{aligned}\right.
	\end{equation}
	For multiple boundary turning point problems, i.e., $p^{(k)}(0)=0$ for 
	$k=1,2,\cdots,m$, it is 
	proved\cite{vulanovic1993continuous} that there 
	exist positive constants $C,\eta$ independent of $\varepsilon$ such that 
	the 
	following estimates hold true:
	\begin{equation}
		\label{eq:mbtpp}
		|u^{(k)}(x)|\leq C\bigg(
		1+\varepsilon^{-k/2}\exp\big(-\frac{\eta 
			x}{\sqrt{\varepsilon}}\big)
		\bigg),\quad x\in(0,1),\quad k=0,1,2,\cdots.
	\end{equation}
	One can deduce the following result immediately:
	\begin{equation}
		\label{eq:btpp}\begin{aligned}
			|u^{(k)}(x)|&\leq C\min\{(1+\varepsilon^{-k/2}),(1+x^{-k})\}\\
			&=C(1+(\max\{x,\sqrt{\varepsilon}\})^{-k}).\\
		\end{aligned}
	\end{equation}
	
	Now assume that the boundary turning point is single, i.e., $p'(0)\ne0$. 	
	Boundary behaviors of such problems differ from those of \eqref{eq1a}. We 
	introduce 
	the 
	following approximated problem:
	\begin{equation}
		\label{eq1c'}
		\left\{
		\begin{aligned}
			& \tilde{L}u\equiv-\varepsilon u''+p'(0) xu'+b(0)u=f(0), \quad 
			0<x<1,\\
			& u(0) = u_L,\quad u(1) = u_R,\\
			& p'(0)\neq0,\quad b(0)>0.\\
		\end{aligned}\right.
	\end{equation}
	We divide the discussion of problem \eqref{eq1c'} into two cases by the 
	signal of 
	$p'(0)$. 
	Inspired by \cite{berger1984priori}, we could represent the solution as a 
	linear combination of Weber's parabolic cylinder functions, which we use to 
	analyze the bounds of the derivatives.
	\subsubsection{Preparations for estimates}
	We introduce a lemma to estimate the solution $u$ more precisely.	
	\begin{lemma}
		\rm\label{lemma:pDu}
		There exists one and only one solution $u$ to \eqref{eq1c}. Besides, 
		there is a 
		constant $C$ independent with
		$\varepsilon$, such that
		\begin{equation}
			\label{eq:pDu}
			|p(x)u'(x)|\leq C, \quad\forall x\in[0,1/2].
		\end{equation}
	\end{lemma}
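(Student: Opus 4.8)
The plan is to prove the two assertions separately. Existence and uniqueness follow from the classical theory of linear two-point boundary value problems: since $b\ge b_0>0$, the operator $L$ obeys a maximum principle, so the homogeneous problem has only the trivial solution, and the Fredholm alternative then yields a unique $u$. The same maximum principle, compared against the constant barrier $\Psi\equiv\max\{|u_L|,|u_R|\}+\|f\|_\infty/b_0$, gives the uniform bound $\|u\|_\infty\le C$, which I will use freely below.

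For the gradient bound I first observe that it is only delicate near the turning point: on any fixed subinterval $[\delta,1/2]$ the coefficient $p$ is bounded away from $0$, the problem is uniformly non-degenerate, and standard interior (away-from-layer) estimates give $|u'|\le C$ there, so $|pu'|\le C$ holds trivially. The crux is therefore the behaviour as $x\to 0$. A first, robust estimate comes from the conservative form $(-\varepsilon u'+pu)'+(b-p')u=f$: writing $J:=-\varepsilon u'+pu$ for the flux, one has $J'=f-(b-p')u$, whose right-hand side is bounded by the previous step together with \eqref{eq:condition}. Anchoring $J$ at a mean-value point $\xi$ with $u'(\xi)=u_R-u_L$ and integrating gives $|J|\le C$ on $[0,1]$, hence $|\varepsilon u'(x)|=|pu-J|\le C$ uniformly. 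This is the easy half of the derivative control; it is, however, too weak near the layer, where the sharp scaling is $u'=O(\varepsilon^{-1/2})$ rather than $O(\varepsilon^{-1})$.

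To reach $|pu'|\le C$ I would differentiate \eqref{eq1c} and work with $w:=pu'$ directly. A short computation shows that $w$ solves a problem of the same type, $-\varepsilon w''+pw'+(b+2p')w=g$, whose source $g=-\varepsilon p''u'+2p'(f-bu)+p(f'-b'u)$ is bounded by a constant; here the bound $|\varepsilon u'|\le C$ just obtained is exactly what controls the term $\varepsilon p''u'$. Together with the boundary data $w(0)=p(0)u'(0)=0$ and $|w(1/2)|\le C$, a maximum-principle/barrier argument on $[0,1/2]$ would then give $|w|\le C$.

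The main obstacle lies in this last step. The zeroth-order coefficient of the $w$-equation is $b+2p'$, which need not be positive: at an \emph{attractive} boundary turning point $p'(0)<0$, and if $b(0)<2|p'(0)|$ the comparison principle fails in a neighbourhood of $x=0$, exactly where $w$ is largest. I would resolve this not by a constant barrier but by one adapted to the layer width $\sqrt\varepsilon$: seek $\Psi$ of the form $A\,x\,\exp(-\eta x/\sqrt\varepsilon)+D$, whose profile matches the expected behaviour $pu'\sim t\,e^{-\eta t}$ with $t=x/\sqrt\varepsilon$, exploiting the convection term $pw'$—which is favourable for a decreasing barrier when $p<0$—to compensate the indefinite reaction near the turning point. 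Alternatively, and more in the spirit of the rest of the paper, one compares $u$ on $[0,\delta]$ with the solution of the frozen-coefficient problem \eqref{eq1c'}, which is explicit in Weber's parabolic cylinder functions and for which $|p'(0)x\,u'|\le C$ can be checked directly; a standard perturbation/comparison estimate then transfers the bound to the variable-coefficient problem. Verifying that such a barrier (or the comparison error) is controlled uniformly in $\varepsilon$ near $x=0$ is the technical heart of the argument.
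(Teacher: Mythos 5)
Your setup is sound as far as it goes: existence/uniqueness via the maximum principle, the uniform bound $\|u\|_\infty\le C$, the reduction to a neighbourhood of the turning point, the flux identity giving $|\varepsilon u'|\le C$, and the derivation of the equation $-\varepsilon w''+pw'+(b+2p')w=g$ for $w=pu'$ are all correct. But the proof does not close: the one step that actually delivers $|pu'|\le C$ near $x=0$ is left unverified, and it is exactly the step that can fail. The zeroth-order coefficient $b+2p'$ is negative near an attractive boundary turning point whenever $b(0)<2|p'(0)|$, a case the hypotheses of \eqref{eq1c} (and even \eqref{eq:condition}) do not exclude, so the comparison principle you invoke is unavailable there. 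Of the two repairs you sketch, the barrier $\Psi=Ax\exp(-\eta x/\sqrt\varepsilon)+D$ looks doubtful: after rescaling $t=x/\sqrt\varepsilon$, every term produced by the operator acting on the $Ax\,e^{-\eta x/\sqrt\varepsilon}$ part is $O(A\sqrt\varepsilon)$, so it cannot dominate the $O(1)$ source $g$ unless $A\sim\varepsilon^{-1/2}$, which destroys the $O(1)$ bound on $w$ you are after, while the constant part $D$ suffers from the same indefinite reaction term. The frozen-coefficient comparison is closer in spirit to the paper, but the parabolic-cylinder analysis of Lemmas \ref{lemma:positive} and \ref{lemma:negative} \emph{uses} the present lemma as input (through the decomposition $g=hx$ in \eqref{eq:decomposition_of_u}), so you would have to check you are not arguing in a circle.

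The paper closes this gap by a more elementary route that avoids any second-order maximum principle for $w$: differentiate the equation once and read the result as a \emph{first-order} linear ODE for $z=u''$, namely $-\varepsilon z'+pz=s$ with $s=f'-b'u-(p'+b)u'$. Variation of constants with $P(x)=-\int_0^x p(t)\,dt$ gives an explicit representation whose kernel $\exp\bigl((P(t)-P(x))/\varepsilon\bigr)$ is at most $1$ on the relevant range; the troublesome $u'$ in the source is removed by an integration by parts, leaving only $u$ and the total variation of the exponential kernel, and $z(0)=O(\varepsilon^{-1})$ comes from evaluating the equation at $x=0$ where $p(0)=0$. This yields $|\varepsilon u''|\le C$ on $[0,1/2]$, and then $pu'=f-bu+\varepsilon u''$ is bounded directly from the equation itself. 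Note that if you could establish $|\varepsilon u''|\le C$ first by any means, your $w$-equation would become unnecessary, since the desired bound already follows; conversely, your preliminary estimate $|\varepsilon u'|\le C$ is a nice observation but is genuinely too weak, as you acknowledge, because it only gives $|pu'|\le C|p|/\varepsilon$.
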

	\begin{proof}
		We suffice to show that $\varepsilon u''(x)$ is bounded by $C$ on 
		$[0,1/2]$, for $u$ could be bounded by $f$ using the maximum 
		principle.
		First assume that $p'(0)=-\alpha<0$, and let $z(x)=u''(x)$. 
		Differentiate \eqref{eq1c} once, and we have:
		$$-\varepsilon z'+p(x)z=s(x),$$
		where $s(x)=s_1(x)+s_2(x)$,
		$$s_1(x)=f'(x)-b'(x)u,$$
		$$s_2(x)=-(p'(x)+b(x))u'.$$
		Let $$P(x)=-\int_{0}^x p(t)dt.$$
		Since $$p'(x)\leq -\frac{1}{2}\alpha<0,$$
		it holds that:
		\begin{displaymath}
			\begin{aligned}
				p(x)\leq-\frac{1}{2}\alpha x\leq0,\\
				P(x)\geq \frac{\alpha}{4}x^2\geq0.\\
			\end{aligned}
		\end{displaymath}
		Thus 
		\begin{displaymath}
			\begin{aligned}
				-\frac{P(x)}{\varepsilon}\leq0,\\
				P(t)-P(x)=\int_{t}^{x}p(\tau)d\tau\leq0,\\
			\end{aligned}
		\end{displaymath}
		for $0<t<x.$
		By variance of constants we have
		\begin{equation}
			\label{eq:z(x)}
			z(x)=z(0)\exp\Big(-\frac{P(x)}{\varepsilon}\Big)-
			\varepsilon^{-1}\int_{0}^{x}\exp\Big(\frac{P(t)-P(x)}{\varepsilon}\Big)s(t)dt.
		\end{equation}
		Taking $x=0$ in \eqref{eq1c},
		$$z(0)=\varepsilon^{-1}(b(0)u(0)-f(0))=C\varepsilon^{-1}.$$
		The integral in the second term of \eqref{eq:z(x)} is split into terms 
		with $s_1$ and 
		$s_2$:
		$$I_1=\int_{0}^{x}\exp\Big(\frac{P(t)-P(x)}{\varepsilon}\Big)s_1(t)dt\leq\int_{0}^{x}1\cdot
		Cdt=C;$$
		$$\begin{aligned}
			I_2 
			&=\int_{0}^{x}\exp\Big(\frac{P(t)-P(x)}{\varepsilon}\Big)s_2(t)dt\\
			&=-\int_{0}^{x}\exp\Big(\frac{P(t)-P(x)}{\varepsilon}\Big)\Big(p'(t)+b(t)\Big)u'(t)dt\\
			&=-\exp\Big(\frac{P(t)-P(x)}{\varepsilon}\Big)\Big(p'(t)+b(t)\Big)u(t)\bigg|_{t=0}^{t=x}\\
			&\quad+\int_{0}^{x}\exp\Big(\frac{P(t)-P(x)}{\varepsilon}\Big)\Big(p''(t)+b'(t)\Big)u(t)dt\\
			&\quad+\int_{0}^{x}\frac{d}{dt}\Big[\exp\Big(\frac{P(t)-P(x)}{\varepsilon}\Big)\Big]\Big(p'(t)+b(t)\Big)u(t)dt\\
			&\leq 
			C+C+C\int_{0}^{x}\frac{d}{dt}\Big[\exp\Big(\frac{P(t)-P(x)}{\varepsilon}\Big)\Big]dt\\
			&=C,\\
		\end{aligned}$$
		where we use the second mean value theorem for integrals before the 
		inequality. Thus we have shown that $\varepsilon u''(x)$ is 
		bounded on $[0,1]$ if $p'(0)<0$.
		
		In the case $p'(0)=\alpha>0$, the same argument could be repeated 
		with the 
		following modification:
		$$	
		z(x)=\exp\Big(\frac{P(\frac{1}{2})-P(x)}{\varepsilon}\Big)z\Big(\frac{1}{2}\Big)-
		\varepsilon^{-1}\int_{x}^{1/2}
		\exp\Big(\frac{P(\tau)-P(x)}{\varepsilon}\Big)s(\tau)d\tau,$$
		where $|z(\frac{1}{2})|$ 
		is bounded by $C$.
		The result here applies only for 
		$x\in[0,\frac{1}{2}]$ because \eqref{eq1c} has a boundary layer at 
		$x=1$.
	\end{proof}
	We have shown that $p(x)u'(x)$ is bounded near the singular point $x=0$. 
	The original problem \eqref{eq1c} is solved by decomposing $u=u_1+u_2+u_0$:
	\begin{equation}
		\begin{aligned}
			&u_0=\frac{f(0)}{b(0)},\\
			&\left\{
			\begin{aligned}
				&\tilde{L}u_1=0,\quad 0<x<1,\\
				&u_1(0)=u_L-u_0,\quad  u_1(1)=u_R-u_0,\\
			\end{aligned}
			\right.\\
			&\left\{
			\begin{aligned}
				&\tilde{L}u_2=g(x),\quad 0<x<1,\\
				&u_2(0)=u_2(1)=0,\\
			\end{aligned}
			\right.
		\end{aligned}
		\label{eq:decomposition_of_u}
	\end{equation}
	where $\tilde{L}$ is defined in \eqref{eq1c'}, and 
	$$g(x)\equiv\big(f(x)-f(0)\big)-\big(p(x)-p'(0) 
	x\big)u'-\big(b(x)-b(0)\big)u.$$
	From Lemma \ref{lemma:pDu} we could write $$g(x)=h(x)x.$$ 
	We solve $u_1$ by direct representation with parabolic cylinder functions, 
	while $u_2$ is related to Green's function for $\tilde{L}$.
	\subsubsection{Basic property of parabolic cylinder 
	function}\label{sec:2.3.2}
	Parabolic cylinder functions $U(a,x),V(a,x)$, using Weber's 
	notations, are linear independent solutions to the equation:
	\begin{equation}
		-y''+\Big(a+\frac{x^2}{4}\Big)y=0,
		\notag
		\label{eq:pcf}
	\end{equation}
	where $a$ is a coefficient. 
	The following properties will be used later:
	\begin{align}
		&\pi V(a,x)=\Gamma\Big(\frac{1}{2}+a\Big)\Big(\sin \pi a\cdot 
		U(a,x)+U(a,-x)\Big);\tag{\ref{eq:pcf}{.1}}\label{eq:imag1}\\
		&\Gamma\Big(\frac{1}{2}+a\Big)U(a,x)=\pi \sec^2 \pi 
		a\Big(V(a,-x)-\sin \pi a\cdot V(a,x))\Big);\tag{\ref{eq:pcf}{.2}}\\
		&\sqrt{2\pi}U(a,ix)=\Gamma\Big(\frac{1}{2}-a\Big)\Big(e^{-i\pi(-\frac{a}{2}-\frac{1}{4})}U(-a,x)+e^{i\pi(-\frac{a}{2}-
			\frac{1}{4})}U(-a,-x)\Big);\tag{\ref{eq:pcf}{.3}}\label{eq:imag2}\\
		&U'(a,x)+\frac{1}{2}xU(a,x)+\Big(a+\frac{1}{2}\Big)U(a+1,x)=0;\tag{\ref{eq:pcf}{.4}}\\
		&U'(a,x)-\frac{1}{2}xU(a,x)+U(a-1,x)=0;\tag{\ref{eq:pcf}{.5}}\\
		&V'(a,x)+\frac{1}{2}xV(a,x)-V(a+1,x)=0;\tag{\ref{eq:pcf}{.6}}\\
		&V'(a,x)-\frac{1}{2}xV(a,x)-\Big(a-\frac{1}{2}\Big)V(a-1,x)=0;\tag{\ref{eq:pcf}{.7}}\\
		&U(a,x)= \exp\Big(-\frac{1}{4}x^2\Big)x^{-a-\frac{1}{2}}\delta_1, 
		\quad	x\geq C_0;\tag{\ref{eq:pcf}{.8}}\\
		&V(a,x)= 
		\sqrt{\frac{2}{\pi}}\exp\Big(\frac{1}{4}x^2\Big)x^{a-\frac{1}{2}}\delta_2,
		\quad	 x\geq C_0.\tag{\ref{eq:pcf}{.9}}\label{prop:pcf}
	\end{align}
	$C_0=O(1)$ is a constant related to $a$, and coefficients 
	$\delta_{1},\delta_2$ satisfy $$|\delta_i-1|\leq \frac{1}{3}.$$
	\subsubsection{Green's function of the operator $\tilde{L}$}
	Denote $\mu_0$ by the solution to 
	$$\left\{\begin{aligned}
		&\tilde{L}u=0,\\
		&u(0)=0,\quad u(1)=1,\\
	\end{aligned}\right.$$
	and $\mu_1$ by the solution to
	$$\left\{\begin{aligned}
		&\tilde{L}u=0,\\
		&u(0)=1,\quad u(1)=0.\\
	\end{aligned}\right.$$
	The Wronskian of $\mu_0$ and $\mu_1$ is 
	$$W(x)=W(\mu_0,\mu_1)=\mu_0(x)\mu_1'(x)-\mu_1(x)\mu_0'(x).$$
	The Green's function of $\tilde{L}$ is piecewise defined on $[0,1]$:
	\begin{displaymath}
		G(x,\tau)=\left\{
		\begin{aligned}
			&-\varepsilon^{-1}\mu_0(x)\mu_1(\tau)\exp\Big(\frac{p'(0)}{2\varepsilon}(1-\tau^2)\Big)/W(1),&0\leq
			x\leq\tau\leq1,\\
			&-\varepsilon^{-1}\mu_1(x)\mu_0(\tau)\exp\Big(\frac{p'(0)}{2\varepsilon}(1-\tau^2)\Big)/W(1),&0\leq
			\tau\leq x\leq1,\\
		\end{aligned}
		\right.
	\end{displaymath}
	which satisfies 
	$$\tilde{L}G(x,\tau)=\delta(x-\tau),$$
	$$G(0)=G(1)=0.$$ 
	$u_2$ defined in \eqref{eq:decomposition_of_u} could be represented as 
	$$u_2(x)=\int_{0}^{1} G(x,\tau)\tau h(\tau) d\tau.$$
	Thus estimates for $u_2$ turn into ones for $G(\cdot,\tau)$ and its 
	derivatives.	
	\subsubsection{The case $p'(0)=\alpha>0$}
	We first estimate derivatives of the solution $u$ in the next 
	lemma.
	\begin{lemma}
		\rm\label{lemma:positive}
		Assume $u$ is the solution to \eqref{eq1c} with $p'(0)>0$, and 
		$\rho=C_0\sqrt{\varepsilon}$, where $C_0$ is defined in section 
		\ref{sec:2.3.2}. For 
		$k=1,2,\cdots$,
		\begin{equation}
			\label{eq:positive}
			|u^{(k)}(x)|\leq\left\{
			\begin{aligned}
				&C(1+\rho^{-k}), &0\leq x\leq\rho,\\
				&C(1+x^{-k}),&\rho\leq x\leq 1/2.\\
			\end{aligned}
			\right.
		\end{equation}
		Rewriting these estimates in a more compact form, we have:
		\begin{equation}
			\tag{\ref{eq:positive}{'}}
			|u^{(k)}(x)|\leq C(1+(\max\{x,\rho\})^{-k}),\quad 0\leq x\leq 1/2.
		\end{equation}
	\end{lemma}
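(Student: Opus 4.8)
The plan is to bound the solution through the splitting $u=u_0+u_1+u_2$ introduced in \eqref{eq:decomposition_of_u} and to handle each constituent by its explicit parabolic cylinder representation. Since $u_0=f(0)/b(0)$ is constant, $u_0^{(k)}\equiv 0$ for $k\ge1$, so it suffices to estimate $u_1^{(k)}$ and $u_2^{(k)}$ on $[0,1/2]$. Throughout I write $\alpha=p'(0)>0$, and I recall from Lemma~\ref{lemma:pDu} and the factorization $g=hx$ that the right-hand side of the $u_2$-problem vanishes linearly at the turning point with $h$ bounded independently of $\varepsilon$.

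For $u_1$, which solves $\tilde Lu_1=0$ with $\tilde L$ as in \eqref{eq1c'}, I would first rescale $x=\sqrt{\varepsilon/\alpha}\,w$ and set $u_1=\exp(w^2/4)\,y(w)$. A direct computation turns $\tilde Lu_1=0$ into Weber's equation $-y''+(a+w^2/4)y=0$ with $a=b(0)/\alpha-1/2>-1/2$, so that $u_1$ is a linear combination of $\exp(w^2/4)U(a,w)$ and $\exp(w^2/4)V(a,w)$. By the large-argument asymptotics of Section~\ref{sec:2.3.2}, the $V$-component grows like $\exp(w^2/2)$ while the $U$-component only decays algebraically, like $w^{-a-1/2}$; matching the $O(1)$ datum at $x=1$ (where $w=\sqrt{\alpha/\varepsilon}$ is large) forces the coefficient of the $V$-part to be exponentially small, so on $[0,1/2]$ one has $u_1(x)=A\exp(w^2/4)U(a,w)$ up to an exponentially negligible remainder, with $A=O(1)$ fixed by $u_1(0)=u_L-u_0$. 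Differentiating via the recurrence relations of Section~\ref{sec:2.3.2} shows $\tfrac{d^k}{dw^k}[\exp(w^2/4)U(a,w)]=O(w^{-a-1/2-k})$, and since every $x$-derivative carries a factor $(\alpha/\varepsilon)^{1/2}$ the crossover scale is exactly $\rho=C_0\sqrt\varepsilon$. For $0\le x\le\rho$ the variable $w$ is bounded, every derivative is $O(\varepsilon^{-k/2})=O(\rho^{-k})$; for $\rho\le x\le1/2$ the asymptotics give $|u_1^{(k)}(x)|\le Cx^{-k}$, the powers of $\varepsilon$ cancelling precisely because $\rho\propto\sqrt\varepsilon$. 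This delivers the desired bound for $u_1$.

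For $u_2$ I would use the Green's function representation $u_2(x)=\int_0^1 G(x,\tau)\,\tau h(\tau)\,d\tau$ and differentiate under the integral, so that $u_2^{(k)}(x)=\int_0^1 \partial_x^k G(x,\tau)\,\tau h(\tau)\,d\tau$. Here $\mu_0$ is the exponentially growing ($V$-type) solution and $\mu_1$ the decaying ($U$-type) boundary-layer solution, and $\partial_x^k$ falls only on the $x$-dependent factor $\mu_0(x)$ or $\mu_1(x)$, whose derivatives I would again control through the recurrences and asymptotics of Section~\ref{sec:2.3.2}. Splitting the integral at $\tau=x$, weighting by the factor $\exp(\tfrac{p'(0)}{2\varepsilon}(1-\tau^2))/W(1)$, and using that $\tau h(\tau)$ vanishes at the turning point, the goal is to show each piece is integrable and correctly scaled, giving $|u_2^{(k)}(x)|\le C(1+(\max\{x,\rho\})^{-k})$.

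The main obstacle is the $u_2$ estimate. Unlike $u_1$, the Green's function couples the exponentially large solution $\mu_0$ with the decaying solution $\mu_1$ through the Wronskian $W(1)$ and the Gaussian weight, and one must verify that these competing exponential factors cancel so that the weighted integral neither squanders the gain from $g=hx$ nor blows up as $\varepsilon\to0$. Establishing sharp, $\varepsilon$-uniform two-sided bounds on $W(1)$, on $\mu_0,\mu_1$ and their derivatives near the layer, and checking the cancellation in the delicate regime $\tau\approx x\approx\rho$, is where the bulk of the technical work lies; by contrast the $u_1$ bound follows quite directly once the parabolic cylinder representation is in place.
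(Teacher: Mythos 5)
Your overall strategy is the same as the paper's: drop $u_0$, rescale to Weber's equation so that $u_1$ is a combination of $\exp(\tilde x^2/4)U$ and $\exp(\tilde x^2/4)V$ with parameter $\beta-\tfrac12$, and represent $u_2$ through the Green's function built from $\mu_0,\mu_1$. Your treatment of $u_1$ is essentially complete and matches the paper's computation (there the coefficients are read off from the asymptotic inverse of the $2\times2$ boundary matrix, which confirms your observation that $c_2=O(\exp(-\alpha/2\varepsilon))$ so the $V$-part is negligible on $[0,1/2]$, and the recurrences shift the parameter to $\beta+k-\tfrac12$ exactly as you describe). However, the part you defer as ``the bulk of the technical work'' is precisely what the paper's proof consists of for $u_2$: one must actually bound the six integrals obtained by splitting $\int_0^1|G_x(x,\tau)|\tau\,d\tau$ at $\tau=\rho$ and $\tau=x$, using the explicit bounds on $\mu_0^{(k)},\mu_1^{(k)}$ in the two regimes and $|W(1)|\geq C\varepsilon^{\beta/2-1}$. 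As written, your argument asserts rather than proves the $u_2$ estimate, so the lemma is not yet established.

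There is also a concrete flaw in your route to the higher derivatives of $u_2$. You propose $u_2^{(k)}(x)=\int_0^1\partial_x^kG(x,\tau)\,\tau h(\tau)\,d\tau$, but $G_x(\cdot,\tau)$ has a jump across $\tau=x$, so for $k\geq2$ the distributional derivative $\partial_x^kG$ contains Dirac masses (indeed $\tilde LG=\delta(x-\tau)$ forces $-\varepsilon G_{xx}$ to carry $\delta(x-\tau)$), and the formula as stated omits the resulting boundary terms. The paper avoids this entirely: it differentiates the equation $(k-1)$ times, observes that $v=u^{(k-1)}$ solves a problem of the same form with $\beta$ replaced by $\beta+k-1$ and boundary data $u^{(k)}(0)\sim\varepsilon^{-k/2}$, $u^{(k)}(1)\sim\varepsilon^{-k}$, and then reruns the first-derivative argument by induction. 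You would need either to adopt that induction or to carefully track the jump contributions when differentiating under the integral.
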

	
	\begin{proof}
		Since $u_0$ in \eqref{eq:decomposition_of_u} has no contribution to the 
		derivative, for the sake of simplicity, we replace $u$ by $u_1+u_2$ and 
		still denote it by $u$.
		We introduce the following change of variable, for both $u_1$ and $u_2$ 
		satisfy an equation of $\tilde{L}$:
		$$\tilde{x}=\frac{x}{\sqrt{\varepsilon/\alpha}},$$
		$$u(x)=\tilde{u}(\tilde{x})\exp\Big(\frac{\tilde{x}^2}{4}\Big).$$
		Denoting $\beta=b(0)/\alpha>0$, we obtain the equation for $\tilde{u}$:
		$$-\tilde{u}''+\Big(\frac{\tilde{x}^2}{4}+\beta-\frac{1}{2}\Big)\tilde{u}=0.$$
		$\tilde{u}$ admits linear independent solutions 
		$U(\beta-\frac{1}{2},\tilde{x}),V(\beta-\frac{1}{2},\tilde{x})$. Hence
		$$u(x)=c_1\exp\Big(\frac{\tilde{x}^2}{4}\Big)U\Big(\beta-\frac{1}{2},\tilde{x}\Big)+
		c_2\exp\Big(\frac{\tilde{x}^2}{4}\Big)V\Big(\beta-\frac{1}{2},\tilde{x}\Big).$$
		Coefficients $c_1,c_2$ are determined by boundary conditions:
		\begin{displaymath}
			\left(\begin{array}{cc}
				U\Big(\beta-\frac{1}{2},0\Big) & 
				V\Big(\beta-\frac{1}{2},0\Big)\\
				\exp(\frac{\alpha}{4\varepsilon})U\Big(\beta-\frac{1}{2},\frac{1}{\sqrt{\varepsilon/\alpha}}\Big)
				& 
				\exp(\frac{\alpha}{4\varepsilon})V\Big(\beta-\frac{1}{2},\frac{1}{\sqrt{\varepsilon/\alpha}}\Big)
				\\
			\end{array}\right)
			\left(\begin{array}{c}
				c_1\\c_2\\
			\end{array}
			\right)
			=\left(\begin{array}{c}
				u_L\\u_R\\
			\end{array}
			\right).
		\end{displaymath}
		Call the matrix on the left-hand-side $A$. Denoting $K_i$ as 
		constants of $O(1)$, we could rewrite $A$ in 
		the asymptotic form:
		\begin{displaymath}
			A\approx\left(\begin{array}{cc}
				K_1 & K_2\\
				K_3\varepsilon^{\beta/2} & 
				K_4\exp(\frac{\alpha}{2\varepsilon})\varepsilon^{(1-\beta)/2} \\
			\end{array}\right),
		\end{displaymath}
		\begin{displaymath}
			A^{-1}\approx\left(\begin{array}{cc}
				K_4 & 
				-K_2\exp(-\frac{\alpha}{2\varepsilon})\varepsilon^{(\beta-1)/2}\\
				-K_3\exp(-\frac{\alpha}{2\varepsilon})\varepsilon^{\beta-1/2} & 
				K_1\exp(-\frac{\alpha}{2\varepsilon})\varepsilon^{(\beta-1)/2} 
				\\
			\end{array}\right).
		\end{displaymath}
		Therefore the coefficients are represented as follows:
		\begin{displaymath}
			\begin{aligned}
				\left(\begin{array}{c}
					c_1\\c_2\\
				\end{array}
				\right)&=A^{-1}\left(\begin{array}{c}
					u_L\\u_R\\
				\end{array}
				\right).
			\end{aligned}
		\end{displaymath}
		We omit all the constants of $O(1)$ for simplicity. Considering 
		$u_L,u_R=O(1)$, the derivatives are in the following 
		form:
		\begin{displaymath}
			\begin{aligned}
				u^{(k)}(x)&=\varepsilon^{-k/2}\bigg[ 
				c_1\exp\Big(\frac{\tilde{x}^2}{4}\Big)\Pi^k_{i=1}(-\beta-i+1)U\Big(\beta+k-\frac{1}{2},\tilde{x}\Big)
				+c_2\exp\Big(\frac{\tilde{x}^2}{4}\Big)V\Big(\beta+k-\frac{1}{2},\tilde{x}\Big)
				\bigg]\\
				&=\varepsilon^{-k/2}\bigg[c_1'\exp\Big(\frac{\tilde{x}^2}{4}\Big)U\Big(\beta+k-\frac{1}{2},\tilde{x}\Big)
				+c_2\exp(\frac{\tilde{x}^2}{4})V\Big(\beta+k-\frac{1}{2},\tilde{x}\Big)
				\bigg].
			\end{aligned}
		\end{displaymath}
		For convenience, write $u_1(x)=u_L\mu_1(x)+u_R\mu_0(x)$, and we 
		estimate 
		$\mu_0$ and $\mu_1$ first in order to 
		estimate $u_1$ and its derivatives on $[0,1/2]$:
		\begin{displaymath}
			\begin{aligned}
					%
					%
					%
					%
					%
				&|\mu_0^{(k)}(x)|\leq\left\{
				\begin{aligned}
					& 
					C\exp(-\frac{\alpha}{2\varepsilon})\varepsilon^{\frac{\beta-k-1}{2}},
					& |x|\leq\rho,\\
					& C\Big( 
					\exp(-\frac{\alpha}{2\varepsilon})\varepsilon^{\beta-\frac{1}{2}}x^{-\beta-k}
					+\varepsilon^{-k}\exp\big(-\frac{\alpha}{2\varepsilon}(1-x^2)\big)x^{\beta+k-1}
					\Big), &\rho\leq|x|\leq1,\\
				\end{aligned}
				\right.\\
				&|\mu_1^{(k)}(x)|\leq\left\{
				\begin{aligned}
					& C\varepsilon^{-\frac{k}{2}}, & |x|\leq\rho,\\
					& C\Big(\varepsilon^{\frac{\beta}{2}}x^{-\beta-k}+ 
					\exp\big(-\frac{\alpha}{2\varepsilon}(1-x^2)\big)\varepsilon^{\frac{\beta}{2}-k}
					x^{\beta+k-1} \Big), &\rho\leq|x|\leq1,\\
				\end{aligned}
				\right.\\
			\end{aligned}
		\end{displaymath}
		which hold for $k=0,1,2,\cdots$. Thus $$|W(1)|\geq 
		C\varepsilon^{\beta/2-1}.$$
		If $x\leq \rho$,  from $$u_1^{(k)}=u_L\mu_1^{(k)}+u_R\mu_0^{(k)},$$ we 
		have:
		$$\begin{aligned}
			|u_1^{(k)}(x)|&\leq 
			C\Big(|u_R||\mu_0^{(k)}(x)|+|u_L||\mu_1^{(k)}(x)|\Big)\\
			&\leq 
			C\Big(|u_R|\exp(-\frac{\alpha}{2\varepsilon})\varepsilon^{\frac{\beta-k-1}{2}}+
			|u_L|\varepsilon^{-\frac{k}{2}}\Big),\quad k=1,2,\cdots.\\
		\end{aligned}$$
		If $\rho\leq x\leq1/2$, 
		$$\exp(-\frac{\alpha}{2\varepsilon}(1-x^2))\leq\exp(-\frac{\alpha'}{2\varepsilon}),$$
		where $\alpha'=\frac{3}{4}\alpha,$ we have
		\begin{displaymath}
			\begin{aligned}
				|u_1^{(k)}(x)|&\leq 
				C|u_L|\Big(\varepsilon^{\frac{\beta}{2}}x^{-\beta-k}+ 
				\exp\big(-\frac{\alpha}{2\varepsilon}(1-x^2)\big)\varepsilon^{\frac{\beta}{2}-k}
				x^{\beta+k-1} \Big)\\
				&\quad +C|u_R|\Big( 
				\exp(-\frac{\alpha}{2\varepsilon})\varepsilon^{\beta-\frac{1}{2}}x^{-\beta-k}
				+\varepsilon^{-k}\exp\big(-\frac{\alpha}{2\varepsilon}(1-x^2)\big)x^{\beta+k-1}
				\Big)\\
				&\leq C(1+x^{-k}),\quad k=1,2,\cdots.
			\end{aligned}
		\end{displaymath}	
		Constant $C$ may depend on $\beta,k,\alpha,C_0$.
		Derivatives of $u_2$ are estimated by induction.
		\begin{displaymath}
			\begin{aligned}
				|u_2'(x)|&\leq C\int_{0}^{1}|G_x(x,\tau)|\tau d\tau\\
				&=\left\{ 
				\begin{aligned}
					&\int_{0}^{x}+\int_{x}^{\rho}+\int_{\rho}^{1},&0\leq 
					x\leq\rho,\\
					&\int_{0}^{\rho}+\int_{\rho}^{x}+\int_{x}^{1},&\rho\leq 
					x\leq\frac{1}{2}.\\
				\end{aligned}
				\right.
			\end{aligned}
		\end{displaymath}
		Denote these six integrals as $I_{1,2,3}$ and $I_{1,2,3}'$.	The first 
		three are easy to handle:\\
		$I_1: 0\leq\tau\leq x\leq\rho$:\\ 
		$$\begin{aligned}
			&|G_x|\leq 
			C\varepsilon^{-1}\exp\Big(-\frac{\alpha}{2\varepsilon}\tau^2\Big),\\
			&I_1=\int_{0}^x |G_x|\tau d\tau\leq C.\\
		\end{aligned}$$			
		$I_2: 0\leq x\leq \tau\leq\rho$:\\
		$$\begin{aligned}
			&|G_x|\leq 
			C\varepsilon^{-1}\exp\Big(-\frac{\alpha}{2\varepsilon}\tau^2\Big),\\
			&I_2=\int_{x}^{\rho} |G_x|\tau d\tau\leq C.\\
		\end{aligned}$$
		$I_3: 0\leq x\leq \rho\leq \tau$:\\
		$$\begin{aligned}
			&|G_x|\leq 
			C\Big(\varepsilon^{\frac{\beta}{2}-1}\exp(-\frac{\alpha}{2\varepsilon}\tau^2)\tau^{-\beta}+
			\varepsilon^{\frac{\beta}{2}-1}\exp(-\frac{\alpha}{2\varepsilon})\tau^{\beta-1}\Big),\\
			&I_3=\int_{\rho}^{1} |G_x|\tau d\tau\leq C\int_{\rho}^{1}
			\exp\Big(-\frac{\alpha}{2\varepsilon}\tau^2\Big)\tau^{1-\beta}\varepsilon^{\frac{\beta-1}{2}}
			\varepsilon^{-\frac{1}{2}}d\tau+C\exp\Big(-\frac{\alpha}{2\varepsilon}\Big)\varepsilon^{\frac{\beta}{2}-1}\\
			&\quad \leq C\varepsilon^{-\frac{1}{2}}+C\\
			&\quad =C(1+\rho^{-1}).\\
		\end{aligned}$$
		Thus when $x\leq\rho$, we have $$|u_2'(x)|\leq C(1+\rho^{-1}).$$
		For $x\geq\rho$, consider the last three integrals:\\
		$I_1': 0\leq\tau\leq\rho\leq x\leq\frac{1}{2}$: 
		$$\begin{aligned}
			|G_x|&\leq\varepsilon^{\frac{\beta-1}{2}}\exp\Big(-\frac{\alpha}{2\varepsilon}\tau^2\Big)x^{-\beta-1}
			+\varepsilon^{\frac{\beta-1}{2}}x^{\beta}\exp\Big(-\frac{\alpha}{2\varepsilon}(1-x^2)\Big)\exp\Big(-\frac{\alpha}{2\varepsilon}\tau^2\Big)\varepsilon^{-1}\\
			&\leq 
			C\varepsilon^{-1}\exp\Big(-\frac{\alpha}{2\varepsilon}\tau^2\Big),\\
			I_1'&=\int_{0}^{\rho} |G_x|\tau d\tau\leq C.\\
		\end{aligned}$$
		$I_2': \rho\leq\tau\leq x\leq\frac{1}{2}$:
		$$\begin{aligned}
			|G_x|&\leq 
			\bigg(x^{-\beta-1}+\varepsilon^{-1}\exp\big(-\frac{\alpha}{2\varepsilon}(1-x^2)\big)x^{\beta}
			\bigg)\bigg(
			\exp\big(-\frac{\alpha}{2\varepsilon}\tau^2\big)\varepsilon^{\beta-\frac{1}{2}}\tau^{-\beta}+\tau^{\beta-1}
			\bigg),\\
			|G_x|\tau&\leq 
			\bigg(x^{-\beta-1}+\varepsilon^{-1}\exp\big(-\frac{\alpha}{2\varepsilon}(1-x^2)\big)x^{\beta}\bigg)
			\bigg(\exp\big(-\frac{\alpha}{2\varepsilon}\tau^2\big)\varepsilon^{\beta-\frac{1}{2}}+\tau^\beta\bigg)\\
			&\leq\Big(x^{-\beta-1}+\varepsilon^{-1}
			\exp\big(-\frac{\alpha'}{2\varepsilon}\big)\Big)\Big(C\varepsilon^{\frac{\beta}{2}}+\tau^\beta\Big),\\
			I_2'&=\int_{\rho}^x |G_x|\tau d\tau\leq C(1+x^{-1}).\\
		\end{aligned}$$
		$I_3': \rho\leq x\leq\tau\leq1,x\leq\frac{1}{2}$:
		$$\begin{aligned}
			&|G_x|\leq\bigg(\varepsilon^{\beta-\frac{1}{2}}\exp\big(-\frac{\alpha}{2\varepsilon}\big)x^{-\beta-1}
			+\varepsilon^{-1}\exp\big(-\frac{\alpha}{2\varepsilon}(1-x^2)\big)x^{\beta}
			\bigg)\\
			&\quad\quad\quad\cdot\bigg(
			\exp\big(\frac{\alpha}{2\varepsilon}(1-\tau^2)\big)\tau^{-\beta}+\tau^{\beta-1}
			\bigg),\\
			&I_3'=\int_{x}^1 |G_x|\tau d\tau\leq C(1+x^{-1}).\\
		\end{aligned}$$
		Therefore we have $$|u_2'(x)|\leq C(1+x^{-1}),$$
		when $x\geq\rho$.
		
		Considering that the behavior near $x=1$
		has been studied well as an exponential boundary layer, we narrow down 
		the priori estimate for $u$ to $[0,1/2]$:
		\begin{displaymath}
			|u'(x)|\leq\left\{
			\begin{aligned}
				&C(1+\rho^{-1}), &0\leq x\leq\rho,\\
				&C(1+x^{-1}),&\rho\leq x\leq 1/2.\\
			\end{aligned}
			\right.
		\end{displaymath}
		
		For higher derivatives $u^{(k)}(k\geq2)$, we 
		could 
		differentiate $(k-1)$ 
		times the equation in \eqref{eq1c} and split 
		$v=u^{(k-1)}$ into three parts as $u$ is decomposed in 
		\eqref{eq:decomposition_of_u}. It is noticed that $v$ satisfies a 
		similar equation 
		to 
		\eqref{eq1c'}, with $\beta$ replaced by $\beta+k-1$ and boundary 
		conditions in 
		asymptotic forms. Actually, we can obtain by induction, for 
		$k=1,2,\cdots$:
		$$\begin{aligned}
			&u^{(k)}(0)=\varepsilon^{-\frac{k}{2}},\\
			&u^{(k)}(1)=\varepsilon^{-k}.\\
		\end{aligned}$$
		Simple calculations yield the following result:
		\begin{displaymath}
			|u^{(k)}(x)|\leq\left\{
			\begin{aligned}
				&C(1+\rho^{-k}), &0\leq x\leq\rho,\\
				&C(1+x^{-k}),&\rho\leq x\leq 1/2.\\
			\end{aligned}
			\right.
		\end{displaymath}
	\end{proof}
	\begin{remark}
		\rm\label{remark_positive}
		If we take $x\approx1$ into consideration, the estimates could be 
		modified as:
		\begin{equation}
			|u^{(k)}(x)|\leq\left\{
			\begin{aligned}
				&C(1+\rho^{-k}), &0\leq x\leq\rho,\\
				&C\Big(1+x^{-k}+(1-x)^{-k}\Big),&\rho\leq x\leq 1;\\
			\end{aligned}
			\right.\quad k=1,2,\cdots.
		\end{equation}
		Note that for $\frac{1}{2}\leq x\leq1,$
		$$\exp(-\frac{\alpha}{2\varepsilon}(1-x^2))\leq 
		C(\frac{\varepsilon}{1-x})^k.$$
	\end{remark}
	The following proposition holds as a direct conclusion, and we will use 
	these propositions to show the convergence of the numerical 
	method.
	\begin{proposition}
		\rm\label{prop:positive}
		For the solution $u$ of \eqref{eq1c}, when $p'(0)>0$, there exists a 
		constant 
		$C$, satisfying
		\begin{equation}
			\label{eq:prop:positive}
			|xu'(x)|\leq C, \quad x\leq1/2,
		\end{equation}
		\begin{equation}
			\label{eq:prop:positive2}
			\tag{\ref{eq:prop:positive}{'}}
			|(1-x)u'(x)|\leq C, \quad x\geq1/2.
		\end{equation}	
	\end{proposition}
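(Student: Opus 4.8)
The plan is to obtain this proposition as an immediate corollary of Lemma~\ref{lemma:positive} and Remark~\ref{remark_positive}, specialized to $k=1$. The guiding observation is that the first-derivative bounds already proven are singular only like $x^{-1}$ near the turning point and like $(1-x)^{-1}$ near the right endpoint, so that multiplying by the corresponding linear factor exactly cancels the blow-up and leaves an $O(1)$ quantity.

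For \eqref{eq:prop:positive} on $[0,1/2]$, I would split into the two regimes of Lemma~\ref{lemma:positive}. On the inner region $0\le x\le\rho$, the estimate $|u'(x)|\le C(1+\rho^{-1})$ gives $|xu'(x)|\le\rho\,C(1+\rho^{-1})=C(\rho+1)$, which is $O(1)$ because $\rho=C_0\sqrt{\varepsilon}$ is bounded. On the outer region $\rho\le x\le 1/2$, the estimate $|u'(x)|\le C(1+x^{-1})$ gives $|xu'(x)|\le C(x+1)\le C$. The two pieces together establish the first bound.

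For \eqref{eq:prop:positive2} on $[1/2,1]$, I would instead invoke the sharpened estimate of Remark~\ref{remark_positive}, namely $|u'(x)|\le C\big(1+x^{-1}+(1-x)^{-1}\big)$ for $\rho\le x\le 1$. Since $x^{-1}\le 2$ when $x\ge 1/2$, this reduces to $|u'(x)|\le C\big(1+(1-x)^{-1}\big)$; multiplying by $(1-x)$ then yields $|(1-x)u'(x)|\le C\big((1-x)+1\big)\le C$.

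I do not expect any genuine difficulty here, as the statement is purely a bookkeeping consequence of the pointwise derivative bounds. The only subtlety is to remember that Lemma~\ref{lemma:positive} controls the solution only on $[0,1/2]$, so the estimate near $x=1$ must be drawn from Remark~\ref{remark_positive}, which incorporates the $(1-x)^{-1}$ contribution arising from the exponential boundary layer at the right endpoint.
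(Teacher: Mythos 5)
Your argument is correct and is precisely how the paper obtains this result: the paper presents Proposition~\ref{prop:positive} as ``a direct conclusion'' of Lemma~\ref{lemma:positive} and Remark~\ref{remark_positive}, with the $k=1$ bounds $C(1+\rho^{-1})$, $C(1+x^{-1})$, and $C(1+x^{-1}+(1-x)^{-1})$ cancelling against the factors $x$ and $1-x$ exactly as you describe. Your observation that the region $x\ge 1/2$ must be handled via Remark~\ref{remark_positive} rather than the lemma (which only covers $[0,1/2]$) matches the paper's intent.
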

	\subsubsection{The case $p'(0)=-\alpha<0$}
	In this case, estimates for derivatives of $u$ are mildly different.
	\begin{lemma}
		\rm\label{lemma:negative} 
		Assume that $u$ is the solution to \eqref{eq1c} when $p'(0)<0$. Let 
		$\beta=b(0)/\alpha>0$, and $\rho$ is 
		defined as above. Then we have 
		the following estimates:
		\begin{equation}
			\label{eq:negative}
			\begin{aligned}
				|u^{(k)}(x)|&\leq\left\{
				\begin{aligned}
					&C\Big(1+\rho^{\beta-k}+|u_L|\rho^{-k}\Big),&0\leq 
					x\leq\rho,\\
					&C\Big(1+x^{\beta-k}+|u_L|x^{-k}\Big),&\rho\leq x\leq1;\\
				\end{aligned}
				\right.\quad k=1,2,\cdots,\\
			\end{aligned}
		\end{equation}
		or otherwise in a compact form:
		\begin{equation}
			\label{eq:negative2}
			\tag{\ref{eq:negative}{'}}
			|u^{(k)}(x)|\leq 
			C(1+(\max\{x,\rho\})^{\beta-k}+|u_L|(\max\{x,\rho\})^{-k}),\quad 
			0\leq 
			x\leq 
			1,\quad k=1,2,\cdots.
		\end{equation}
	\end{lemma}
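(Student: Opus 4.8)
The plan is to mirror the proof of Lemma~\ref{lemma:positive}, adapting each step to the sign $p'(0)=-\alpha<0$. As before, I would discard the constant $u_0$ in the decomposition \eqref{eq:decomposition_of_u} and treat $u_1$ and $u_2$ separately, both governed by $\tilde L$. The essential modification is in the change of variable: setting $\tilde x=x\sqrt{\alpha/\varepsilon}$ and $u(x)=w(\tilde x)\exp(-\tilde x^2/4)$ (note the reversed sign in the exponent compared with the positive case) reduces $\tilde L u=0$ to the parabolic cylinder equation $-w''+(\tilde x^2/4+\beta+\tfrac12)w=0$, so the fundamental solutions are $\exp(-\tilde x^2/4)\,U(\beta+\tfrac12,\tilde x)$ and $\exp(-\tilde x^2/4)\,V(\beta+\tfrac12,\tilde x)$. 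The crucial qualitative consequence, read off from the asymptotics in Section~\ref{sec:2.3.2}, is that the $U$-mode now decays like $\exp(-\tilde x^2/2)\tilde x^{-\beta-1}$ while the $V$-mode grows only polynomially, like $\tilde x^{\beta}$. In particular, unlike the positive case there is no exponential boundary layer at $x=1$ (the convection $p(x)\approx-\alpha x$ points away from that endpoint toward the turning point), which is why the estimate \eqref{eq:negative} is claimed on all of $[0,1]$ rather than only on $[0,1/2]$.

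For $u_1$ I would write $u_1=u_L\mu_1+u_R\mu_0$ with $\mu_0,\mu_1$ the fundamental solutions normalized by the boundary data, solve for the coefficients through the $2\times2$ boundary-condition matrix, and invert it asymptotically using the large-argument forms of $U,V$ at $\tilde x=\sqrt{\alpha/\varepsilon}$. I would then estimate $\mu_0^{(k)}$ and $\mu_1^{(k)}$ separately on $|x|\le\rho$ and $\rho\le|x|\le1$, differentiating by means of the recurrence relations of Section~\ref{sec:2.3.2} (each derivative shifts the parameter and introduces a factor $\varepsilon^{-1/2}$). Combining these yields the two terms appearing in \eqref{eq:negative}: the datum $u_R$ excites, through $\mu_0$, the polynomially growing $V$-mode $\approx x^{\beta}$, which contributes the \emph{regular} part of size $x^{\beta-k}$, whereas the nonzero datum $u_L$ at the turning point excites, through $\mu_1$, the rapidly varying $U$-mode and produces the \emph{singular} part of size $|u_L|\,x^{-k}$ (resp.\ $|u_L|\,\rho^{-k}$ inside the layer). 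Keeping these two contributions cleanly separated, and in particular showing that the $x^{-k}$ singularity is carried only by $u_L$ and not by $u_R$ or the forcing, is the heart of the computation.

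For $u_2$ I would use the Green's-function representation $u_2(x)=\int_0^1 G(x,\tau)\tau h(\tau)\,d\tau$, where the factorization $g(x)=x\,h(x)$ is supplied by Lemma~\ref{lemma:pDu}. Splitting the integral at $\rho$ and at $x$ exactly as in the six pieces $I_1,I_2,I_3,I_1',I_2',I_3'$ of the positive case, and using the corresponding bounds on $G$ and $G_x$ built from the fundamental solutions above, I expect $u_2$ and its derivatives to contribute only to the regular part $C(1+x^{\beta-k})$, carrying no $|u_L|$ term since $u_2(0)=0$. Higher derivatives $u^{(k)}$ with $k\ge2$ then follow by induction: differentiating \eqref{eq1c} $(k-1)$ times, $v=u^{(k-1)}$ satisfies an equation of the same form with $\beta$ replaced by $\beta+k-1$ and with boundary data of the already-established sizes, so the one-derivative estimate propagates. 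The main obstacle throughout is the asymptotic bookkeeping for the coefficient matrix and for the fundamental solutions near $x=0$, which must be sharp enough to attribute the $x^{-k}$ singularity exclusively to $u_L$ and to produce the exponent $\beta-k$ (rather than $-k$) in the regular term.
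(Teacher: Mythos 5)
Your proposal follows essentially the same route as the paper: discard $u_0$, reduce $\tilde L u=0$ to the parabolic cylinder equation with parameter $\beta+\tfrac12$ (the paper reaches the same real-argument representation slightly more circuitously, via the imaginary-argument relations \eqref{eq:imag1} and \eqref{eq:imag2} for $U(a,ix)$), invert the $2\times2$ boundary matrix asymptotically, bound $\mu_0^{(k)},\mu_1^{(k)}$ separately on $x\le\rho$ and $x\ge\rho$ so that the $x^{-k}$ singularity is carried by $u_L$ and the $x^{\beta-k}$ term by $u_R$, handle $u_2$ through the Green's function split at $\rho$ and $x$, and propagate to higher derivatives by shifting $\beta\mapsto\beta+k-1$. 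The qualitative reading of the two modes (decaying $U$-mode, polynomially growing $V$-mode, no exponential layer at $x=1$) matches the paper's estimates and Remark \ref{remark_negative}, so the plan is correct.
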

	\begin{proof}
		If we let 
		$\tilde{x}=\frac{x}{\sqrt{-\varepsilon/\alpha}}$, the new variable 
		becomes pure imaginary. 
		From the property that evaluation of $U(a,iz)$ and $V(a,iz)$ could be 
		represented by 
		$U(-a,z),V(-a,z)$ (c.f. \eqref{eq:imag1},\eqref{eq:imag2}), we assume 
		the 
		solution to be:
		$$u(x)=c_1\exp\Big(\frac{\tilde{x}^2}{4}\Big)U\Big(\frac{1}{2}+\beta,|\tilde{x}|\Big)+
		c_2\exp\Big(\frac{\tilde{x}^2}{4}\Big)V\Big(\frac{1}{2}+\beta,|\tilde{x}|\Big).$$
		For simplicity we denote 
		$\hat{x}=|\tilde{x}|=\frac{x}{\sqrt{\varepsilon/\alpha}}$, which gives:
		$$u(x)=c_1\exp\Big(-\frac{\hat{x}^2}{4}\Big)U\Big(\frac{1}{2}+\beta,\hat{x}\Big)+
		c_2\exp\Big(-\frac{\hat{x}^2}{4}\Big)V\Big(\frac{1}{2}+\beta,\hat{x}\Big).$$
		Again we solve coefficients $c_1,c_2$ from boundary conditions 
		$u_L,u_R$. Let
		\begin{displaymath}
			\begin{aligned}
				A&=\left(\begin{array}{cc}
					U\Big(\frac{1}{2}+\beta,0\Big) & 
					V\Big(\frac{1}{2}+\beta,0\Big)\\
					\exp(-\frac{\alpha}{4\varepsilon})U\Big(\frac{1}{2}+\beta,\frac{1}{\sqrt{\varepsilon/\alpha}}\Big)
					& 
					\exp(-\frac{\alpha}{4\varepsilon})V\Big(\frac{1}{2}+\beta,\frac{1}{\sqrt{\varepsilon/\alpha}}\Big)
					\\
				\end{array}\right)\\
				&\approx\left(\begin{array}{cc}
					K_1 & K_2\\
					K_3\exp(-\frac{\alpha}{2\varepsilon})\varepsilon^{(1+\beta)/2}
					& K_4\varepsilon^{-\beta/2} \\
				\end{array}\right).\\
			\end{aligned}
		\end{displaymath}
		The inverse of $A$ reads:
		\begin{displaymath}
			\begin{aligned}
				A^{-1}&\approx\left(\begin{array}{cc}
					K_4 & -K_2\varepsilon^{\beta/2}\\
					-K_3\exp(-\frac{\alpha}{2\varepsilon})\varepsilon^{1/2+\beta}
					& K_1\varepsilon^{\beta/2} \\
				\end{array}\right)\\
				&\approx C\left(\begin{array}{cc}
					1&\varepsilon^{\beta/2}\\
					\exp(-\frac{\alpha}{2\varepsilon})\varepsilon^{1/2+\beta}&\varepsilon^{\beta/2}\\
				\end{array}\right).
			\end{aligned}
		\end{displaymath}
		Then $c_1,c_2$ could be written as
		\begin{displaymath}
			\begin{aligned}
				\left(\begin{array}{c}
					c_1\\c_2\\
				\end{array}
				\right)&=A^{-1}\left(\begin{array}{c}
					u_L\\u_R\\
				\end{array}
				\right).\\
			\end{aligned}
		\end{displaymath}
		As the previous case, for $k=1,2,\cdots$,
		$$u^{(k)}(x)=\varepsilon^{-k/2}\Big[ 
		c_1'\exp\Big(-\frac{\hat{x}^2}{4}\Big)U\Big(\beta+\frac{1}{2}-k,\hat{x}\Big)+
		c_2'\exp\Big(-\frac{\hat{x}^2}{4}\Big)V\Big(\beta+\frac{1}{2}-k,\hat{x}\Big)
		\Big].$$
		For $\mu_0,\mu_1$, and $k=0,1,2,\cdots$,
		\begin{displaymath}
			\begin{aligned}
				&\mu_0^{(k)}(x)=\exp\Big(-\frac{\hat{x}^2}{4}\Big)\varepsilon^{\frac{\beta-k}{2}}
				\bigg(U\Big(\beta+\frac{1}{2}-k,\hat{x}\Big)+V\Big(\beta+\frac{1}{2}-k,\hat{x}\Big)\bigg),\\
				&\mu_1^{(k)}(x)=\exp\Big(-\frac{\hat{x}^2}{4}\Big)
				\varepsilon^{-\frac{k}{2}}\bigg(U\Big(\beta+\frac{1}{2}-k,\hat{x}\Big)
				+\varepsilon^{\frac{1}{2}+\beta}\exp(-\frac{\alpha}{2\varepsilon})V\Big(\beta+\frac{1}{2}-k,
				\hat{x}\Big)\bigg).\\
			\end{aligned}
		\end{displaymath}
		We have the following estimates:
		\begin{displaymath}
			\begin{aligned}
				&|\mu_0^{(k)}(x)|\leq\left\{
				\begin{aligned}
					&C\varepsilon^{\frac{\beta-k}{2}},&x\leq\rho,\\
					&C\Big(\exp\big(-\frac{\alpha}{2\varepsilon}x^2\big)
					\varepsilon^{\beta+\frac{1}{2}-k}x^{-\beta+k-1}+x^{\beta-k}\Big),&x\geq\rho;\\
				\end{aligned}
				\right.\\
				&|\mu_1^{(k)}(x)|\leq\left\{
				\begin{aligned}
					&C\varepsilon^{-\frac{k}{2}},&x\leq\rho,\\
					&C\Big(\exp\big(-\frac{\alpha}{2\varepsilon}x^2\big)
					\varepsilon^{\frac{\beta+1}{2}-k}x^{-\beta+k-1}
					+\exp(-\frac{\alpha}{2\varepsilon})
					\varepsilon^{\frac{\beta+1}{2}}x^{\beta-k}\Big),&x\geq\rho;\\
				\end{aligned}
				\right.
			\end{aligned}
			\quad k=0,1,\cdots.
		\end{displaymath}
		Since these two estimates are different from the case $p'(0)>0$, we 
		might keep $u_L$ and $u_R$ 
		as 
		independent variables.\\
		\begin{displaymath}
			\begin{aligned}
				|u_1^{(k)}(x)|&\leq 
				|u_R||\mu_0^{(k)}(x)|+|u_L||\mu_1^{(k)}(x)|\\
				&\leq\left\{
				\begin{aligned}
					&C\Big(|u_R|\varepsilon^{\frac{\beta-k}{2}}+|u_L|\varepsilon^{-\frac{k}{2}}\Big),&x\leq\rho,\\
					&C\Big(|u_R|x^{\beta-k}+|u_L|x^{-k}\Big),&x\geq\rho;\\
				\end{aligned}
				\right. \quad k=1,2,\cdots.\\
			\end{aligned}
		\end{displaymath}
		$u_2$ could be estimated similarly by computing integrals of Green's 
		function. We omit these details and present the following result.
		\begin{displaymath}
			\begin{aligned}
				|u_2^{(k)}(x)|&\leq\left\{
				\begin{aligned}
					&C\Big(1+\rho^{\beta-k}\Big),&x\leq\rho,\\
					&C\Big(1+x^{\beta-k}\Big),&x\geq\rho;\\
				\end{aligned}
				\right. \quad k=1,2,\cdots.\\
			\end{aligned}
		\end{displaymath}
		The conclusion in Lemma \ref{lemma:negative} consists of estimates for 
		$u_1$ and $u_2$.
	\end{proof}
	\begin{remark}
		\rm\label{remark_negative}
		The solution of the case $p'(0)<0$ is smooth at the endpoint $x=1$, 
		hence estimates are 
		made on the whole interval $[0,1]$. The result in Lemma 
		\ref{lemma:negative} is quite 
		similar to 
		\cite{berger1984priori}, except that nonzero $u_L$ might lower 
		the regularity of the solution.
	\end{remark}
	\begin{proposition}
		\rm\label{prop:negative}
		If $u$ is the solution to \eqref{eq1c} with $p'(0)<0$, there exists a 
		constant $C$ independent of $\varepsilon$, such that
		\begin{equation}
			\label{eq:prop:negative}
			|xu'(x)|\leq C.
		\end{equation}
	\end{proposition}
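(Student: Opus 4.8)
The plan is to read the bound off directly from Lemma~\ref{lemma:negative}, since $|xu'(x)|\le C$ is essentially the $k=1$ instance of \eqref{eq:negative2} after multiplication by the weight $x$. Writing $m=\max\{x,\rho\}$, the compact estimate with $k=1$ gives $|u'(x)|\le C\bigl(1+m^{\beta-1}+|u_L|\,m^{-1}\bigr)$, and because $0\le x\le m$ one immediately obtains
\[
|xu'(x)|\le m\,|u'(x)|\le C\bigl(m+m^{\beta}+|u_L|\bigr).
\]
So the whole proposition reduces to checking that this right-hand side is bounded by a constant independent of $\varepsilon$.

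For this I would note that $m=\max\{x,\rho\}\le 1$ once $\varepsilon$ is small enough that $\rho=C_0\sqrt{\varepsilon}\le 1$, which is the regime of interest. Then $m\le 1$ controls the first term, $m^{\beta}\le 1$ controls the second because $\beta=b(0)/\alpha>0$, and $|u_L|$ is fixed boundary data; hence each of the three contributions is $O(1)$ and the bound $|xu'(x)|\le C$ follows on all of $[0,1]$. Unlike the case $p'(0)>0$ (Proposition~\ref{prop:positive}), no splitting at $x=1/2$ is needed: by Remark~\ref{remark_negative} the solution is smooth at $x=1$, and Lemma~\ref{lemma:negative} already holds on the entire interval.

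The only point that requires care — and the closest thing to an obstacle — is that the two potentially singular contributions to $|u'|$, namely $m^{\beta-1}$ and $|u_L|\,m^{-1}$, both blow up as $x,\rho\to0$ when $\beta<1$. The factor $x$ is precisely what tames them: $m\cdot m^{\beta-1}=m^{\beta}$ stays bounded exactly because $\beta>0$ (guaranteed by $b_0>0$ together with $\alpha=-p'(0)>0$), while $m\cdot m^{-1}=1$ cancels the $x^{-1}$ singularity created by nonzero left boundary data $u_L$. Thus the linear weight $x$ is sharp for the $u_L$-term, and the positivity of $\beta$ is what makes the remaining interior-layer term integrable against it.
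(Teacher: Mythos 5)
Your argument is correct and is exactly the route the paper intends: the proposition is stated as a direct corollary of Lemma~\ref{lemma:negative} (the paper gives no separate proof), and your multiplication of the $k=1$ compact estimate \eqref{eq:negative2} by $x\le\max\{x,\rho\}$, together with $\beta>0$ and $\max\{x,\rho\}\le 1$ in the small-$\varepsilon$ regime, supplies precisely the missing details. Your closing observation that the weight $x$ is sharp for the $|u_L|\,m^{-1}$ term and that $\beta>0$ tames the $m^{\beta-1}$ term is a correct reading of why the conclusion holds on all of $[0,1]$ without a split at $x=1/2$.
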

	\begin{remark}
		\rm\label{prop:compare}
		Estimates \eqref{eq:mbtpp} are stronger than Lemma \ref{lemma:positive} 
		and \ref{lemma:negative}. 
		Analysis in \cite{vulanovic1993continuous} applies in the cases 
		$k\ge2$, where $k$ stands for multiples of the turning point, while the 
		same argument no longer holds for $k=1$. 
		Another difference is that their estimates are made on the whole 
		interval $[0,1]$. In contrast, estimates hold for $[0,1/2]$ in Lemma 
		\ref{lemma:positive} and for the whole interval in Lemma 
		\ref{lemma:negative}. Estimates \eqref{eq:positive} and 
		\eqref{eq:negative}
		give upper bounds for $x\leq\rho$ and $x\geq\rho$ separately when the 
		turning 
		point is single, and it is unknown 
		whether these estimates could be combined into one expression in an 
		essential way.
	\end{remark}
	\section{Numerical method}\label{sec:3}
	In this section we first introduce some definitions and weak formulations 
	in section \ref{sec:3.1}. We derive the weak solution using a 
	Petrov-Galerkin 
	finite 
	element method (PGFEM), summarized in Algorithm \ref{alg_PGFEM}. 
	If we know the analytic expressions of the solutions to the dual problems, 
	we directly use them as the test functions in PGFEM; otherwise, the dual 
	problems are solved 
	numerically by TFPM on a uniform mesh, as described in section
	\ref{sec:3.2}. Furthermore, we prove 
	first-order convergency of PGFEM in $L^\infty$-norm and energy 
	norm in section
	\ref{sec:3.3} when test functions are evaluated exactly.
	\subsection{Definitions and formulations}\label{sec:3.1}
	The weak form of problem \eqref{eq1} is: Find $u\in H^1(x_L,x_R)$ such that
	\begin{equation}
		\label{weak_eq1}
		\begin{aligned}
			&A_\epsilon(u,v)\equiv \epsilon(u',v')+(pu',v)+(bu,v)=(f,v),\quad 
			\forall 
			v\in H^1_0(x_L,x_R),\\
			&u(x_L)=u_L,\quad u(x_R)=u_R.
		\end{aligned}
	\end{equation}
	
	Let us take a partition $\{x_i, i=0,1,\cdots,N\}$ on $[x_L,x_R]$, 
	including any possible interior turning point:
	$$x_L=x_0<x_1<\cdots<x_N=x_R,$$
	$$I_i=[x_{i-1},x_i],\quad i=1,2,\cdots,N,$$
	$$h_i=\left\{
	\begin{aligned}
		&x_{i}-x_{i-1},&i=1,\cdots,N,\\
		&0,&i=0,N+1,\\
	\end{aligned}
	\right.$$
	and the mesh size $h$ is defined as
	$$h=\max_{1\leq i\leq N}h_i.$$
	
	In this section, we use $L^\infty$, $L^2$ and an energy norm 
	$||\cdot||_\varepsilon$ for a function $u$:
	\begin{align}
		&||u||_{L^\infty}=\max_{x_L\leq x\leq x_R}|u(x)|,\\
		&||u||_{L^2} = \sqrt{\int_{x_L}^{x_R}|u(x)|^2dx},\\
		&||u||_\varepsilon = 
		\sqrt{||u||_{L^2}^2+\varepsilon||u'||_{L^2}^2},\label{eq:ucon}
	\end{align}
	and the corresponding discrete infinity norm $||\cdot||_{L^\infty_h}$ and 
	discrete energy norm $||\cdot||_{\varepsilon,h}$ for a grid function $u_h$:
	\begin{align}
		&||u_h||_{L^\infty_h}=\max_{0\leq i\leq N}|u_h(x_i)|,\label{eq:uinf}\\
		&||u_h||_{\varepsilon,h} = \sqrt{||u_h||_{L^2_h}^2 +\varepsilon 
			||u_h'||_{L^2_h}^2}.\label{eq:ueng}
	\end{align}
	Here $L^2_h$ is the discrete $L^2$ space with the norm defined on the grid, 
	and $u_h'$ is computed by a difference scheme:
	\begin{align}
		||u_h||_{L^2_h}&=\sqrt{\sum_{i=0}^{N}u_h^2(x_i)\frac{h_i+h_{i+1}}{2}},\label{eq:ul2}\\
		||u_h'||_{L^2_h}&=\sqrt{\sum_{i=1}^{N}\bigg(\frac{u_h(x_i)-u_h(x_{i-1})}{h_i}\bigg)^2h_i}.\label{eq:upl2}
	\end{align}
	
	Before discretization of finite element method, we first approximate 
	\eqref{eq1} by the following problem:
	\begin{equation}
		\label{eq1'}
		\left\{
		\begin{aligned}
			& \bar{L}u_h\equiv-\varepsilon 
			u_h''+\bar{p}(x)u_h'+\bar{b}(x)u_h=\bar{f}(x),\quad
			x_L<x<x_R,\\
			& u_h(x_L) = u_L, \quad u_h(x_R) = u_R,\\
		\end{aligned}\right.
	\end{equation} 
	where $\bar{p},\bar{b},\bar{f}$ are piecewise approximations to the 
	corresponding functions. Test function space $V_h$ is defined by a group of 
	basis 
	functions $\{\psi_i\}_{i=1}^{N-1}$ with $\psi_i$ solving the dual problem 
	of 
	\eqref{eq1'} on 
	$I_i\cup I_{i+1}$:
	\begin{equation}
		\left\{
		\begin{aligned}
			& \bar{L}^*\psi_i\equiv-\varepsilon 
			\psi_i''-\bar{p}(x)\psi_i'+(\bar{b}(x)-\bar{p}'(x))\psi_i=0, \quad 
			x_{i-1}<x<x_{i+1},\\
			& \psi_i(x_i) = 1, \quad\psi_i(x_j) = 0\,(j\neq i). \\
		\end{aligned}\right.
	\end{equation}
	Functions $\{\psi_i\}_{i=1}^{N-1}$ are referred to as $L^*-splines$ in some 
	articles.
	
	Then we use PGFEM to discretize the weak form of 
	\eqref{eq1'}: Find $u_h\in 
	U_h$ such that
	\begin{equation}\label{pgfem}
		\begin{aligned}
			&A_\epsilon(u_h,v_h)\equiv 
			\epsilon(u'_h,v'_h)+(\bar{p}u'_h,v_h)+(\bar{b}u_h,v_h)=(\bar{f},v_h),
			\quad \forall v_h\in V_h,\\
			&u_h(x_L)=u_L,\quad u_h(x_R)=u_R.
		\end{aligned}
	\end{equation}
	where
	\begin{align}
		U_h&=\bigg\{v_h\in C[x_L,x_R]\,\bigg|\,v_h|_{I_i} \mbox{ is linear 
		function}, i=1,\cdots,N\bigg\},\\
		V_h&=\bigg\{v_h\,\bigg|\,v_h=\sum_{i=1}^{N-1}c_i\psi_i,\,c_i\in\mathbb{R},
		\,i=1,\cdots,N-1\bigg\}.\label{def:V_h}
	\end{align}
	
	\begin{remark}
		\rm\label{remark_exact_numerical}
		If we use parabolic cylinder functions as test functions, it is 
		usual to compute a cut-off of the series expansion of these special 
		functions in order to generate the stiffness matrix and the 
		right-hand-side 
		term. We compute parabolic cylinder 
		functions in MATLAB using codes from fortran90 
		by\cite{temme2000numerical,olver2010nist}.
		In some cases, numerical cost is 
		expensive when we need these special functions to be precise enough. 
		Moreover, we could not analytically represent 
		the solution to the dual problem with a nonlinear first-order 
		coefficient.
		In practice, it works as well if we substitute exact evaluations of 
		special functions with numerical solutions described in the following 
		subsection. 
	\end{remark}
	\subsection{Numerical method of dual problems}\label{sec:3.2}
	We apply TFPM on the uniform mesh to each dual problem.
	Precisely, for a specific dual problem:
	\begin{equation}
		\label{dual}
		-\varepsilon \psi''+\hat{a}(x-X_0)\psi'+\hat{b}\psi=0,\quad X_1<x<X_2,
	\end{equation}
	the solution is determined with the following boundary conditions, for 
	instance:
	\begin{equation}
		\label{dual:bd}
		\psi(X_1)=1,\quad\psi(X_2)=0.
	\end{equation}
			%
			%
	%
	We make a uniform partition on the subinterval:
	\begin{displaymath}
		Y_j=X_1+(j-1)\frac{X_2-X_1}{N_1},\quad j=0,1,\cdots,N_1.
	\end{displaymath}
	TFPM solution on $[Y_{i-1},Y_{i+1}]$ is the linear 
	combination 
	of solutions to the equation:
	\begin{equation}
		-\varepsilon \psi''+\hat{a}(Y_{i}-X_0)\psi'+\hat{b}\psi=0,\quad 
		Y_{j-1}<x<Y_{j+1}.
		\label{dual:subinterval}
	\end{equation}
	Equation \eqref{dual:subinterval} admits two exponential solutions 
	$\psi^{(1)},\psi^{(2)}$.
	Denoting $\psi_j=\psi(Y_j)$, we suppose:
	\begin{equation}
		\label{eq:cond_dual}
		\alpha_{i,i-1}\psi_{i-1}+\alpha_{i,i}\psi_i+\alpha_{i,i+1}\psi_{i+1}=0,
	\end{equation}
	where we presume $\alpha_{i,i}=1$ because 
	\eqref{dual:subinterval} is homogeneous, and the rest of the coefficients 
	are 
	determined by 
	requiring $\psi^{(1)}$ and $\psi^{(2)}$ to satisfy \eqref{eq:cond_dual}. 
	By gathering conditions \eqref{eq:cond_dual} at $i=1,\cdots,N_1-1$ 
	together with boundary 
	conditions \eqref{dual:bd}, we obtain a tri-diagonal linear system 
	which gives evaluations of approximated dual solutions
	$\{\psi(Y_i)\}_{i=0}^{N_1}$. We remark that TFPM on the uniform mesh 
	described above yields smaller errors than simple 
	finite difference methods.
	
	To compute derivatives at $X_1$ and $X_2$, we represent the solution by
	exponential basis functions on $[Y_0,Y_1]$ and $[Y_{N_1-1},Y_{N_1}]$ 
	respectively. For instance, TFPM solution on $[Y_0,Y_1]$ is assumed to 
	satisfy \eqref{dual:subinterval} which is defined on $Y_0<x<Y_1$. By 
	boundary conditions at $Y_0$ and $Y_1$, the solution is 
	identified on $[Y_0,Y_1]$, and $\psi'(X_1)$ is available by direct 
	calculations.
	
	\subsection{Main results}\label{sec:3.3}
	For PGFEM using exact test functions, we have the following convergence 
	theorem:
	\begin{theorem}
		\rm\label{thm:Linf}
		\textbf{(First-order $L^{\infty}$ Uniform Convergence)} Assume $p,b,f$ 
		as 
		above, and singularities of type (a), (b), and (c) might occur in the 
		solution. Then PGFEM described in section \ref{sec:3.1} converges 
		uniformly 
		in $L^\infty$ norm, i.e., there exists a constant $C$ independent 
		of 
		$h,\varepsilon$, such that
		\begin{equation}
			\label{eq:thm:Linf}
			||e||_{L^\infty}\leq Ch,
		\end{equation}
		with $e=u-u_h$, where $u$ is the strong solution to problem 
		\eqref{weak_eq1}, $u_h$ is the strong solution to problem \eqref{pgfem}.
	\end{theorem}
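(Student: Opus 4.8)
The plan is to split the global error $e=u-u_h$ into a \emph{nodal} part and a \emph{local interpolation} part, to control the nodal values through a discrete maximum principle inherited from the $L^*$-spline structure, and to absorb the interpolation part using the weighted a priori bounds of Section~\ref{sec:2}. Since the trial space $U_h$ consists of continuous piecewise-linear functions, $u_h$ coincides on each element $I_i$ with the linear interpolant of its own nodal data; writing $I_h u$ for the piecewise-linear interpolant of the exact solution $u$, the triangle inequality gives, on every element,
\begin{equation*}
\|e\|_{L^\infty(I_i)}\le \omega_i+\|I_hu-u_h\|_{L^\infty(I_i)}=\omega_i+\max_{j\in\{i-1,i\}}|e(x_j)|,\qquad \omega_i:=\|u-I_hu\|_{L^\infty(I_i)},
\end{equation*}
where the equality uses that $I_hu-u_h$ is linear on $I_i$ and $I_hu(x_j)=u(x_j)$. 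Thus it suffices to prove the nodal bound $\max_i|e(x_i)|\le Ch$ and the interpolation bound $\max_i\omega_i\le Ch$, both uniform in $\varepsilon$.

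First I would make the scheme \eqref{pgfem} explicit. Testing against $\psi_i$ and integrating by parts on $I_i\cup I_{i+1}$, the defining property $\bar L^*\psi_i=0$ annihilates the volume term and leaves only boundary contributions at $x_{i-1},x_i,x_{i+1}$; because $\psi_i(x_{i\pm1})=0$ and $\psi_i(x_i)=1$, this collapses \eqref{pgfem} into a three-point relation
\begin{equation*}
\alpha_{i,i-1}\,u_h(x_{i-1})+\alpha_{i,i}\,u_h(x_i)+\alpha_{i,i+1}\,u_h(x_{i+1})=(\bar f,\psi_i),
\end{equation*}
whose weights are governed by the one-sided derivatives $\psi_i'(x_{i-1}^+)$, the jump of $\psi_i'$ at $x_i$, and $\psi_i'(x_{i+1}^-)$. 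The condition \eqref{eq:condition}, $b-p'\ge\gamma_0>0$, which guarantees well-posedness of each dual problem, is exactly what forces the resulting matrix to be an $M$-matrix, so a discrete maximum principle holds. I would then apply this discrete operator $L_h$ to the restriction of the exact $u$ to the grid and estimate the consistency residual $\tau_i:=L_hu(x_i)-(\bar f,\psi_i)$. Two sources enter: the replacement of $p,b,f$ by the piecewise approximations $\bar p,\bar b,\bar f$, each accurate to $O(h)$ on an element, and the mismatch between $Lu=f$ and $\bar Lu$. Crucially, the coefficient of $u'$ multiplying these differences vanishes at the turning point, so near the layer the residual carries the product $p\,u'$ (or $x\,u'$), which is controlled by Lemma~\ref{lemma:pDu} and by the weighted Propositions~\ref{prop:exp}, \ref{prop:cusp}, \ref{prop:positive}, and~\ref{prop:negative}; this keeps $|\tau_i|\le Ch$ uniformly in $\varepsilon$. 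Combining the discrete stability with $\max_i|\tau_i|\le Ch$ yields $\max_i|e(x_i)|\le Ch$.

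It remains to bound $\omega_i$. On any element separated from the layer, $u$ is smooth with derivatives of size $O(1)$, and the standard estimate $\omega_i\le\tfrac{1}{8}h^2\max_{I_i}|u''|$ gives $\omega_i=O(h^2)$. \textbf{The main obstacle is the single element abutting the layer} --- the last element $[1-h,1]$ in the exponential case~(a), and the first element $[0,h]$ in the turning-point cases~(b) and~(c) --- where $|u''|$ is not $O(1)$ and the crude estimate is useless. On that element I would avoid $u''$ entirely and instead use the weighted first-derivative bounds: Proposition~\ref{prop:exp} for case~(a), Proposition~\ref{prop:cusp} for case~(b), and Propositions~\ref{prop:positive}--\ref{prop:negative} for case~(c), which assert that $(1-x)u'$ (respectively $xu'$) stays bounded across the layer. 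Writing $u-I_hu$ as $\int_0^x u'-\tfrac{x}{h}\int_0^h u'$ and inserting these weighted bounds, together with the boundedness of $u$ from the maximum principle and the fact that on a uniform mesh the layer element has width at most $h$, is the decisive estimate I would carry out to push $\omega_i$ down to $Ch$ on the layer element.

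Finally I would assemble: for each $i$, $\|e\|_{L^\infty(I_i)}\le\omega_i+\max_j|e(x_j)|\le Ch$, and taking the maximum over $i$ delivers \eqref{eq:thm:Linf}, with $C$ depending only on the coefficients and on the parameters $\lambda,\beta,\eta$ of the a priori bounds, not on $h$ or $\varepsilon$. I expect essentially all of the difficulty to be concentrated in the layer-element interpolation estimate of the previous paragraph: the nodal analysis is the familiar $M$-matrix argument for $L^*$-spline Petrov--Galerkin schemes, whereas making the \emph{local} interpolation error uniformly first order \emph{inside} the layer is precisely where the sharp weighted estimates of Section~\ref{sec:2} must be used to their full strength, and where uniformity in $\varepsilon$ is won or lost.
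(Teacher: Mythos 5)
The nodal half of your argument is essentially the paper's mechanism: the paper also relies on the maximum principle and the Gartland--Farrell argument, and its one genuinely technical device --- choosing $\bar p$ on elements adjacent to a singular point $s_i$ to be the first-order Taylor polynomial of $p$ at $s_i$ itself, so that $|p-\bar p|\le C|x-s_i|^2\le Ch|x-s_i|$ when $|x-s_i|\le h$ and $|p-\bar p|\le Ch^2\le Ch|x-s_i|$ otherwise, after which Propositions \ref{prop:exp}, \ref{prop:cusp}, \ref{prop:positive}, \ref{prop:negative} absorb the factor $|x-s_i|\,|u'|$ --- is exactly the pairing you gesture at, though you never spell out this two-regime bound, which is where the weighted estimates are actually consumed. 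The fatal problem is the second half. Your claim that the interpolation error $\omega_i=\|u-I_hu\|_{L^\infty(I_i)}$ on the layer element can be pushed to $Ch$ uniformly in $\varepsilon$ is false, and no weighted first-derivative bound can rescue it. In case (a) the layer width is $O(\varepsilon)$; on $[1-h,1]$ with $\varepsilon\ll h$ the solution is essentially flat until within $O(\varepsilon)$ of $x=1$ and then varies by $O(1)$, so any linear function matching the endpoint values misses $u$ by $O(1)$ at $x=1-h/2$. Your own formula confirms this: it yields $|u-I_hu|\le 2\int_{I}|u'|$, and by \eqref{estimate1a} the variation $\int_{1-h}^{1}|u'|\,dx$ is genuinely $O(1)$, not $O(h)$ (Proposition \ref{prop:exp} integrates to a logarithm, which is even worse). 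Likewise in case (b) with $\lambda\in(0,1)$, estimate \eqref{estimate1b} gives a variation $\sim h^{\lambda}\gg h$ across the element containing the turning point, so $\omega\sim h^{\lambda}$; and case (c) with a $\sqrt{\varepsilon}$-wide layer fails the same way. This is precisely the obstruction that motivates Shishkin meshes: a piecewise-linear function on a uniform mesh cannot approximate a sub-mesh-width layer to $O(h)$ in global $L^\infty$.

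The gap stems from reading $u_h$ as the piecewise-linear trial function throughout the interval. In the paper's proof no interpolation term ever appears: with exact $L^*$-spline test functions the scheme is nodally exact for the coefficient-approximated problem \eqref{eq1'}, and the function whose error is estimated is the strong solution of $\bar Lu_h=\bar f$ (this is the meaning of ``strong solution to problem \eqref{pgfem}'' in the statement). The entire proof is then a continuous perturbation argument: $e=u-u_h$ satisfies $e(x_L)=e(x_R)=0$ and
\begin{equation*}
\bar Le=(\bar p-p)u'+(\bar b-b)u+(f-\bar f),
\end{equation*}
so the maximum principle for $\bar L$ gives $\|e\|_\infty\le C\big(\|(p-\bar p)u'\|_\infty+\|b-\bar b\|_\infty\|u\|_\infty+\|f-\bar f\|_\infty\big)$, and the weighted propositions finish as above. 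If you insist on comparing $u$ with the piecewise-linear $u_h$ in $L^\infty$ over the whole interval, the theorem in that form is simply not attainable on a uniform mesh; only the nodal ($L^\infty_h$) part of your argument survives, which is indeed what the numerical experiments in section \ref{sec:4} measure. So your proposal must either drop the interpolation half and adopt the strong-solution reading, or weaken its conclusion to the discrete norm.
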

	\begin{proof}
		We classify all the subintervals $I_k$'s into ones close 
		to singular points and ones far away. More 
		exactly, we list all singular points in ascending order as 
		$\{s_i\}_{i=1}^{m'}$ which 
		contain any possible interior turning points $z_i$ (with 
		$\lambda_i\in(0,1]$) 
		and endpoints (with a boundary layer). Let $0<\delta<\min_i 
		|s_{i}-s_{i-1}|/3$ and define:
		\begin{displaymath}
			\begin{aligned}
				&J_i=[s_i-\delta,s_i+\delta],\quad (i=1,\cdots,m')\\
				&J_r=[x_L,x_R]-\bigcup J_i.\\
			\end{aligned}
		\end{displaymath}
		According to whether the middle point of a subinterval $I_k$ locates in 
		some $J_i$ or not, we 
		approximate $p$ on $I_k$ by either linear functions or constants:
		\begin{displaymath}
			\bar{p}(x)\big|_{I_k}=\left\{
			\begin{aligned}
				&p(x_{k-\frac{1}{2}}), &\text{if } x_{k-\frac{1}{2}}\in J_r,\\
				&p'(x_k^*)(x-x_k^*)+p(x_k^*), &\text{if } x_{k-\frac{1}{2}}\in 
				J_i,\\
			\end{aligned}
			\right.
		\end{displaymath}
		where
		\begin{displaymath}
			x_k^*=\left\{
			\begin{aligned}
				&s_i, &\text{if } I_k \text{ is adjacent to some } 
				s_i,\\
				&x_{k-\frac{1}{2}},& \text{otherwise}.
			\end{aligned}
			\right.
		\end{displaymath}
		Take $\bar{b}(x)\big|_{I_k}$ as piecewise constant 
		$b(x_{k-\frac{1}{2}})$ on every subinterval	$I_k$ and 
		$\bar{f}(x)\big|_{I_k}$ 
		likewise.
		Thus test functions induced by linear approximations are 
		represented by parabolic cylinder functions and ones derived by 
		constants 
		are exponential functions.
		
		We note that maximum principle holds for $L$ and $\bar{L}$, i.e., there 
		exists a constant $C$ independent of $\varepsilon,h$:
		$$||v||_{\infty}\leq C\Big(||Lv||_{\infty}+|v(x_L)|+|v(x_R)|\Big),$$
		$$||v||_{\infty}\leq 
		C\Big(||\bar{L}v||_{\infty}+|v(x_L)|+|v(x_R)|\Big).$$
		With the same arguments by Gartland and 
		Farrell\cite{farrell1988uniform}, 
		we have:
		$$||e||_\infty\leq C\Big\{||(p-\bar{p})u'||_\infty+||b-\bar{b}||
		_\infty||u||_\infty+||f-\bar{f}||_\infty\Big\},$$
		where $e=u-u_h$. We separate the discussion by whether $x_{k-1/2}$ 
		lies in some $J_i$, where $x\in I_k$:\\[1mm]
		\textbf{Case (a)} $x_{k-1/2}\in J_i$.\\[-2mm]
		\begin{itemize}
			\item[]	Suppose 
			$x_{k-1/2}\in J_i$. If the singular point
			$s_i\in(x_L,x_R)$, we 
			have the estimate of the derivative $$|u'(x)|\leq 
			C\Big(|x-s_i|+\sqrt{\varepsilon}\Big)^{\lambda_i-1},$$
			where the coefficient $\lambda_i=-b(s_i)/p'(s_i)\in(0,1]$; 
			$|u'(x)|$ 
			is bounded by a constant $C$ for other 
			evaluations of $\lambda$. By definition of 
			$\bar{p}$, the term $|(p-\bar{p})u'|$ is under control due to the 
			following inequalities and \eqref{eq:prop:cusp}:\\[1mm]
			\begin{displaymath}
				\begin{aligned}
					&|p-\bar{p}|\leq Ch^2\leq Ch|x-s_i|, &\mbox{if }|x-s_i|\geq 
					h,\\
					&|p-\bar{p}|\leq C|x-s_i|^2\leq Ch|x-s_i|, 
					&\mbox{if }|x-s_i|\leq h.\\
				\end{aligned}
			\end{displaymath}\\[1mm]
			In both cases $|x-s_i|$ multiplied by $|u'(x)|$ is bounded. 
			The second inequality holds, for we approximate $p$ by the 
			first-order Taylor polynomial at $x=s_i$ in the intervals 
			adjacent to $s_i$.
			
			In the cases where $s_i$ is an endpoint of the whole interval, 
			generally
			the derivative $u'(x)$ might be significant near the singular 
			point, 
			such as $\varepsilon^{-1}$ or $\varepsilon^{-1/2}$. We have shown 
			in 
			\eqref{eq:prop:exp}, \eqref{eq:prop:positive}, and 
			\eqref{eq:prop:negative}
			that
			$|(x-s_i)u'(x)|$ is bounded uniformly by a constant $C$.
		\end{itemize}
		\textbf{Case (b)} $x_{k-1/2}\in J_r$.\\[-2mm]
		\begin{itemize}
			\item[]
			$J_r$ is $\delta$ away from boundary layers and interior layers. 
			Without singularity, we could write the bound of $|u'|$ as $C_1$, 
			which 
			may depend on a minus power of $\delta$.
		\end{itemize}
		
		In the end, using the facts that $|f-\bar{f}|,|b-\bar b|\leq Ch$ 
		and 
		$|u|\leq C|f|$, the first-order convergence in $L^\infty$ norm is 
		proved.
	\end{proof}
	
	Applying approximations in Theorem \ref{thm:Linf}, we summarize the 
	algorithm as follows:
	\begin{algorithm}
		\label{alg_PGFEM}
		\rm \textbf{(PGFEM for turning point problems)}
		\begin{enumerate}
			\item Identify singular points $s_i$'s and their types of 
			singularities;
			\item Take a partition and add in singular points which are absent 
			in 
			the mesh;
			\item Approximate $p,b,f$ by piecewise constants or  
			piecewise linear functions based on distance from the 
			midpoint of an interval to singular points (see the proof of 
			Theorem \ref{thm:Linf});
			\item Solve dual problems analytically or numerically to		
			evaluate test functions;
			\item Generate stiffness matrix and right-hand-side term using 
			test functions;
			\item Solve a linear system to obtain the numerical solution 
			$u_h$ 
			(on 
			grid points).
		\end{enumerate}
	\end{algorithm}
	\begin{remark}
		\rm\label{remark_of_alg}
		This algorithm is a fitted operator method because we need no special 
		mesh 
		on the whole interval, and the solution is derived by selecting special 
		test functions. One needs to identify the location of singular 
		points in order to use information of the singularities and to solve 
		dual problems with enough precision to construct the linear system.
	\end{remark}
	\begin{theorem}
		\rm\label{thm:L2}
		\textbf{(First-order $L^2$ Uniform Convergence)} Providing the same 
		conditions as the previous theorem, PGFEM 
		is  
		uniformly convergent in $L^2$-norm and $\varepsilon$-norm, i.e., there 
		exists 
		a 
		constant $C$ independent of $h$ and $\varepsilon$, such that
		\begin{equation}
			\label{eq:thm:L2}
			||e||_{L^2}\leq||e||_{\varepsilon}\leq Ch.
		\end{equation}
	\end{theorem}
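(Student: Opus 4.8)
The inequality $\|e\|_{L^2}\le\|e\|_\varepsilon$ is immediate from \eqref{eq:ucon}, since $\|e\|_\varepsilon^2=\|e\|_{L^2}^2+\varepsilon\|e'\|_{L^2}^2\ge\|e\|_{L^2}^2$, so the whole problem reduces to proving $\|e\|_\varepsilon\le Ch$; the genuinely new quantity, which Theorem \ref{thm:Linf} does not reach, is the weighted seminorm $\sqrt{\varepsilon}\,\|e'\|_{L^2}$. My plan is to run a coercivity argument for the exact bilinear form $A_\varepsilon(w,v)=\varepsilon(w',v')+(pw',v)+(bw,v)$ of \eqref{weak_eq1} and then feed in the residual already controlled in the proof of Theorem \ref{thm:Linf}. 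Coercivity on $H^1_0(x_L,x_R)$ is clean because $p$ is smooth: for $w\in H^1_0$ the convective term integrates by parts as $(pw',w)=-\tfrac12\int_{x_L}^{x_R}p'w^2\,dx$ with no boundary contribution, so
\[
A_\varepsilon(w,w)=\varepsilon\|w'\|_{L^2}^2+\int_{x_L}^{x_R}\Big(b-\tfrac12 p'\Big)w^2\,dx .
\]
In each of the cases (a)--(c) one has $b\ge 0$, while \eqref{eq:condition} gives $b-p'\ge\gamma_0$; hence $b-\tfrac12 p'=\tfrac12 b+\tfrac12(b-p')\ge\tfrac12\gamma_0$, and $A_\varepsilon(w,w)\ge c\|w\|_\varepsilon^2$ with $c=\min\{1,\gamma_0/2\}$.

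Since $u$ and $u_h$ share the data $u_L,u_R$, the error $e=u-u_h\in H^1_0$ and coercivity gives $c\|e\|_\varepsilon^2\le A_\varepsilon(e,e)$. I would expand $A_\varepsilon(e,e)=A_\varepsilon(u,e)-A_\varepsilon(u_h,e)=(f,e)-A_\varepsilon(u_h,e)$ and, as in Theorem \ref{thm:Linf}, pass through the approximate operator $\bar L$ of \eqref{eq1'}. Writing $\bar A_\varepsilon(w,v)=\varepsilon(w',v')+(\bar p w',v)+(\bar b w,v)$ for the associated form, the solution $u_h$ of \eqref{eq1'} is $C^1$ and piecewise $H^2$, so integration by parts gives $\bar A_\varepsilon(u_h,v)=(\bar f,v)$ for every $v\in H^1_0$. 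Consequently
\[
A_\varepsilon(u_h,e)=\bar A_\varepsilon(u_h,e)+\big((p-\bar p)u_h',e\big)+\big((b-\bar b)u_h,e\big)=(\bar f,e)+\big((p-\bar p)u_h',e\big)+\big((b-\bar b)u_h,e\big),
\]
and the whole right-hand side collapses to a single residual pairing
\[
A_\varepsilon(e,e)=(r,e),\qquad r=(f-\bar f)-(p-\bar p)u_h'-(b-\bar b)u_h .
\]
Cauchy--Schwarz then yields $c\|e\|_\varepsilon^2\le\|r\|_{L^2}\|e\|_{L^2}\le\|r\|_{L^2}\|e\|_\varepsilon$, i.e. $\|e\|_\varepsilon\le c^{-1}\|r\|_{L^2}$, so everything comes down to showing $\|r\|_{L^2}\le Ch$.

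The main obstacle is the term $\|(p-\bar p)u_h'\|_{L^2}$, since $u_h'$ reaches size $\varepsilon^{-1}$ or $\varepsilon^{-1/2}$ inside the layers; this is exactly where the a priori bounds of Section \ref{sec:2} are needed, and I would reuse the pointwise argument already made in Theorem \ref{thm:Linf}. Near a singular point $s_i$ the first-order Taylor choice of $\bar p$ gives $|p-\bar p|\le Ch\,|x-s_i|$, whence $|(p-\bar p)u_h'|\le Ch\,|(x-s_i)u_h'|$; the weighted derivative estimates of Section \ref{sec:2} --- Propositions \ref{prop:exp}, \ref{prop:cusp}, \ref{prop:positive} and \ref{prop:negative}, which apply equally to the solution $u_h$ of the frozen problem \eqref{eq1'} --- bound $|(x-s_i)u_h'|$, and $|(1-x)u_h'|$ at a right-endpoint layer, by a constant uniform in $\varepsilon$. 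Away from the layers $|u_h'|$ is $O(1)$ and $|p-\bar p|\le Ch$, so in all cases $\|(p-\bar p)u_h'\|_{L^\infty}\le Ch$ and, on the bounded interval, $\|(p-\bar p)u_h'\|_{L^2}\le C\|(p-\bar p)u_h'\|_{L^\infty}\le Ch$. The remaining terms are routine: $\|f-\bar f\|_{L^2}\le C\|f-\bar f\|_{L^\infty}\le Ch$ and $\|(b-\bar b)u_h\|_{L^2}\le\|b-\bar b\|_{L^\infty}\|u_h\|_{L^2}\le Ch$, the factor $\|u_h\|_{L^\infty}$ being bounded by the maximum principle for $\bar L$. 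Collecting these gives $\|r\|_{L^2}\le Ch$ and hence $\|e\|_\varepsilon\le Ch$. I expect the uniform (in $\varepsilon$) control of $(p-\bar p)u_h'$ across the turning-point and boundary layers to be the only delicate step, which is precisely what the weighted derivative estimates of Section \ref{sec:2} are built to supply.
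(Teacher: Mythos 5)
Your overall strategy --- an energy/coercivity estimate in which $\|e\|_\varepsilon$ is controlled by the $L^2$ norm of the residual created by replacing $(p,b,f)$ with $(\bar p,\bar b,\bar f)$ --- is the same as the paper's, and the reduction $\|e\|_{L^2}\le\|e\|_\varepsilon\le c^{-1}\|r\|_{L^2}$ is sound. The difference is which bilinear form you coerce and, consequently, whose derivative appears in the residual. The paper tests the error equation $-\varepsilon e''+\bar p e'+\bar b e=F$ with $F=-(p-\bar p)u'-(b-\bar b)u+(f-\bar f)$, i.e.\ it coerces the \emph{approximate} form; since $\bar p$ jumps at the nodes, the piecewise integration by parts of $(\bar p e',e)$ produces the extra term $\frac{1}{2}\sum_i e^2(x_i)[\bar p](x_i)$ in \eqref{energy_estimate}, which is then absorbed using $|e(x_i)|\le Ch$ from Theorem \ref{thm:Linf} and $|[\bar p](x_i)|\le Ch$ (there are $O(h^{-1})$ nodes, so the sum is $O(h^2)$). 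You instead coerce the exact form $A_\varepsilon$, which neatly avoids the jump terms, but the price is that your residual contains $(p-\bar p)u_h'$ rather than $(p-\bar p)u'$.

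That swap is where the gap is. The weighted derivative bounds of Section \ref{sec:2} (Propositions \ref{prop:exp}, \ref{prop:cusp}, \ref{prop:positive}, \ref{prop:negative}) are proved for the solution of the \emph{original} problem with smooth coefficients; their proofs differentiate the equation and use $p'$, $p''$, so they do not apply verbatim to $u_h$, which solves \eqref{eq1'} with a discontinuous, piecewise constant/linear $\bar p$. Asserting that they ``apply equally to the solution of the frozen problem'' requires a separate argument (plausible, since adjacent to each $s_i$ the frozen operator is exactly the model operator $\tilde L$, but not free). The clean repair is to put the perturbation on the other side: write $\bar A_\varepsilon(e,e)=(F,e)$ with $F$ as above, so that only $u'$ --- for which the Section \ref{sec:2} estimates and the bound $\|(p-\bar p)u'\|_\infty\le Ch$ from the proof of Theorem \ref{thm:Linf} are already available --- enters the residual, and accept the nodal jump terms, which are harmless. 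With that modification your argument coincides with the paper's.
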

	\begin{proof}
		If we consider $L^2$-norm, we start by 
		distracting approximated equation \eqref{eq1'} from the original one
		\eqref{eq1}:
		$$-\varepsilon e''+\bar{p}e'+\bar{b}e=F(x)\equiv 
		-(p-\bar{p})u'-(b-\bar{b})u+(f-\bar{f}).$$
		We have proved that $|e|\leq C|F|\leq \tilde{C}h$. On the assumption 
		that $u,u_h\in C^1$, the 
		following energy 
		estimate holds:
		\begin{equation}
			\label{energy_estimate}
			\int_{x_L}^{x_R} \varepsilon 
			(e')^2+\Big(\bar{b}-\frac{1}{2}\bar{p}'\Big)e^2 
			dx\leq||F||_{L^2}||e||_{L^2}+
			\frac{1}{2}\sum_{i=1}^{N-1}e^2(x_i)[\bar{p}](x_i).
		\end{equation}
		For $h$ sufficiently small, using assumption in \eqref{eq:condition}, 
		it holds that
		\begin{displaymath}
			\bar{b}-\frac{1}{2}\bar{p}'\geq
			\frac{b_0+\gamma_0}{4}.
		\end{displaymath}
		Denoting
		$$\gamma_1 = \min(1,\frac{b_0+\gamma_0}{4}),$$
		we have the estimate in the energy norm from 
		\eqref{eq:thm:Linf} and \eqref{energy_estimate}:
		\begin{equation}
			\label{est2}
			||e||_\varepsilon^2\leq Ch^2,
		\end{equation}
		where the constant $C$ may depend on $p,b,f,\gamma^{-1},$	
		and the jump of $\bar{p}$ is at most $Ch$:
		$$|[\bar{p}](x_k)|\leq|\bar{p}(x_k^-)-p(x_k)|+|p(x_k)-\bar{p}(x_k^+)|\leq
		Ch.$$
	\end{proof}		
	\begin{proposition}
		\label{prop:stab}
		\rm \textbf{(Numerical stability)} The scheme satisfies discrete 
		maximum 
		principle, i.e., 
		the matrix induced by PGFEM is 
		tri-diagonally 
		dominated, which could be verified by taking 
		integrals\cite{farrell1988uniform}.
	\end{proposition}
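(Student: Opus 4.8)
The plan is to write out the entries of the stiffness matrix explicitly and recognize them as those of an M-matrix. Let $\{\phi_j\}_{j=1}^{N-1}$ be the usual nodal (hat) basis of $U_h$, so that the linear system produced by \eqref{pgfem} has matrix entries $M_{ij}=A_\epsilon(\phi_j,\psi_i)$. First I would integrate by parts on each subinterval so as to transfer the derivatives acting on $\phi_j$ onto the test function $\psi_i$. Because $\phi_j$ is piecewise linear (so $\phi_j''=0$ on element interiors) and because $\psi_i$ either satisfies $\bar L^*\psi_i=0$ on its support $[x_{i-1},x_{i+1}]$ or vanishes identically outside it, every volume integral collapses, leaving only interface flux terms at the nodes:
$$M_{ij}=A_\epsilon(\phi_j,\psi_i)=-\sum_{m=1}^{N-1}\phi_j(x_m)\Big(\varepsilon[\psi_i'](x_m)+\psi_i(x_m)[\bar p](x_m)\Big),$$
where $[\,\cdot\,](x_m)$ denotes the jump of a quantity across the node $x_m$.

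Second, I would read off the sparsity pattern. Since $\phi_j(x_m)=\delta_{jm}$, only the term $m=j$ survives, and $\psi_i'$ can jump only at the three nodes $x_{i-1},x_i,x_{i+1}$ (the two endpoints of the support and the interior kink at $x_i$ where the two pieces of $\psi_i$ are matched). Together with $\psi_i(x_j)=\delta_{ij}$ this shows $M_{ij}=0$ whenever $|i-j|\ge 2$, so the matrix is tridiagonal, with entries
$$M_{i,i-1}=-\varepsilon\psi_i'(x_{i-1}^+),\quad M_{i,i+1}=\varepsilon\psi_i'(x_{i+1}^-),\quad M_{i,i}=-\varepsilon[\psi_i'](x_i)-[\bar p](x_i).$$

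Third, I would fix the signs. Hypothesis \eqref{eq:condition}, namely $\bar b-\bar p'\ge\gamma_0>0$, is precisely what makes $\bar L^*$ satisfy a maximum principle, so the dual solution obeys $0\le\psi_i\le 1$ with its maximum attained at $x_i$. Consequently $\psi_i'(x_{i-1}^+)\ge 0$ and $\psi_i'(x_{i+1}^-)\le 0$, which forces both off-diagonal entries to be nonpositive, while the corner at $x_i$ gives $[\psi_i'](x_i)\le 0$. To obtain diagonal dominance I would evaluate the row sum directly: the hat functions form a partition of unity on $[x_{i-1},x_{i+1}]$ and the first-order terms of $A_\epsilon$ annihilate constants, so
$$M_{i,i-1}+M_{i,i}+M_{i,i+1}=A_\epsilon(1,\psi_i)=\int_{x_{i-1}}^{x_{i+1}}\bar b\,\psi_i\,dx>0,$$
whence $M_{i,i}=|M_{i,i-1}|+|M_{i,i+1}|+\int\bar b\,\psi_i>|M_{i,i-1}|+|M_{i,i+1}|$. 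A positive diagonal, nonpositive off-diagonals, and strict row dominance together make $M$ a strictly diagonally dominant matrix of nonpositive type, hence an M-matrix, from which the discrete maximum principle follows.

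The step I expect to be the main obstacle is the third one: rigorously justifying the signs of the one-sided derivatives of $\psi_i$ at the ends of its support. This reduces entirely to the maximum principle for the piecewise-defined adjoint operator $\bar L^*$, for which condition \eqref{eq:condition} is exactly the needed sign hypothesis; the jump $[\bar p](x_i)$ appearing in the diagonal is only an $O(h)$ quantity and, as the row-sum identity shows, never threatens dominance because it is automatically absorbed into $M_{i,i}$.
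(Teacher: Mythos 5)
Your computation is correct and is precisely the ``verification by taking integrals'' that the paper delegates to Farrell and Gartland: the paper supplies no proof of its own, and your integration by parts (with the volume terms annihilated by $\bar L^*\psi_i=0$ on each of $(x_{i-1},x_i)$ and $(x_i,x_{i+1})$), the sign analysis of the one-sided derivatives via the maximum principle for $\bar L^*$ --- available because the zeroth-order coefficient $\bar b-\bar p'$ stays positive for $h$ small by \eqref{eq:condition} --- and the row-sum identity $\sum_j M_{ij}=\int_{x_{i-1}}^{x_{i+1}}\bar b\,\psi_i\,dx$ constitute exactly the intended argument. The only caveat is that this row sum is merely nonnegative when $b$ is allowed to vanish (case (a) permits $b_0\ge 0$), so in general you get irreducible weak diagonal dominance rather than strict dominance, which still delivers the M-matrix property and hence the discrete maximum principle.
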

	\section{Numerical implementation}\label{sec:4}
	In this section, we use three examples to validate the 
	efficiency and convergency of our algorithm.
	Different cases of singularities (a), (b), and (c) are included in these 
	examples. 
	Test functions could be computed with exact parabolic cylinder 
	functions or approximated by numerical solutions.
	PGFEM solutions with fine 
	girds (N=4096) using exact test functions are chosen to be reference 
	solutions in the 
	first two examples; the third one admits an exact solution. 
	We calculate errors by $||\cdot||_{L^\infty_h}$, 
	$||\cdot||_{L^2_h}$ and $||\cdot||_{\varepsilon,h}$ defined in 
	\eqref{eq:uinf}--\eqref{eq:upl2}. 
	\begin{example}
		\label{ex:cos}
		\rm Consider a turning point problem with a cusp-like interior layer 
		and an exponential-type boundary layer:
		\begin{displaymath}
			\left\{
			\begin{aligned}
				&-\varepsilon u''+\cos(2\pi x)u'+u=\frac{1}{1+x^2},\quad 
				0<x<1,\\
				&u(0)=1,\quad u(1)=2.\\
			\end{aligned}
			\right.
		\end{displaymath}
		
		There is an interior layer at $x=1/4$ and a boundary 
		layer at $x=1$, corresponding to cases (b) and (a). We set 
		$\{1/4,3/4,1\}$ to be singular points which need special care.
		The condition that $b-p'$ is bounded from below is not satisfied in 
		this case, while numerical experiments suggest that Algorithm 
		\ref{alg_PGFEM}
		works, for 
		exact test functions, when the repulsive turning point $x=3/4$ is 
		specially treated; for approximate dual solutions, we need to 
		neglect $x=3/4$ for the sake of stablity.
		In practice we take 
		$\delta$ in Theorem \ref{thm:Linf} as follows in all three examples:
		$$\delta = \min \{0.1,\quad \min_i|s_i-s_{i-1}|/3\}.$$
		
		The reference solution, together with numerical solutions using PGFEM 
		on the uniform mesh
		and an up-winding scheme on Shishkin mesh \cite{natesan2003parameter} 
		with 
		both $256$ grids are shown in 
		Figure \ref{fig_sol_cos}. Exact dual solutions are selected as test 
		functions. Compared to non-equidistant mesh of 
		Shishkin 
		type, 
		PGFEM needs 
		no special grids, and values on the uniform mesh points are highly 
		accurate. The solution using Shishkin mesh has a lower 
		resolution outside 
		the interior layer, which could be improved by mesh refinement. PGFEM 
		errors 
		in three different discrete norms versus grid 
		number $N$ are 
		drawn with a log-log plot in Figure \ref{fig_err_cos}, where one may 
		find a 
		nearly 
		second-order convergency. 
		\begin{figure}[ht]
			\subfloat[]{\label{fig:1A}
				\centering
				\includegraphics[width=7cm]{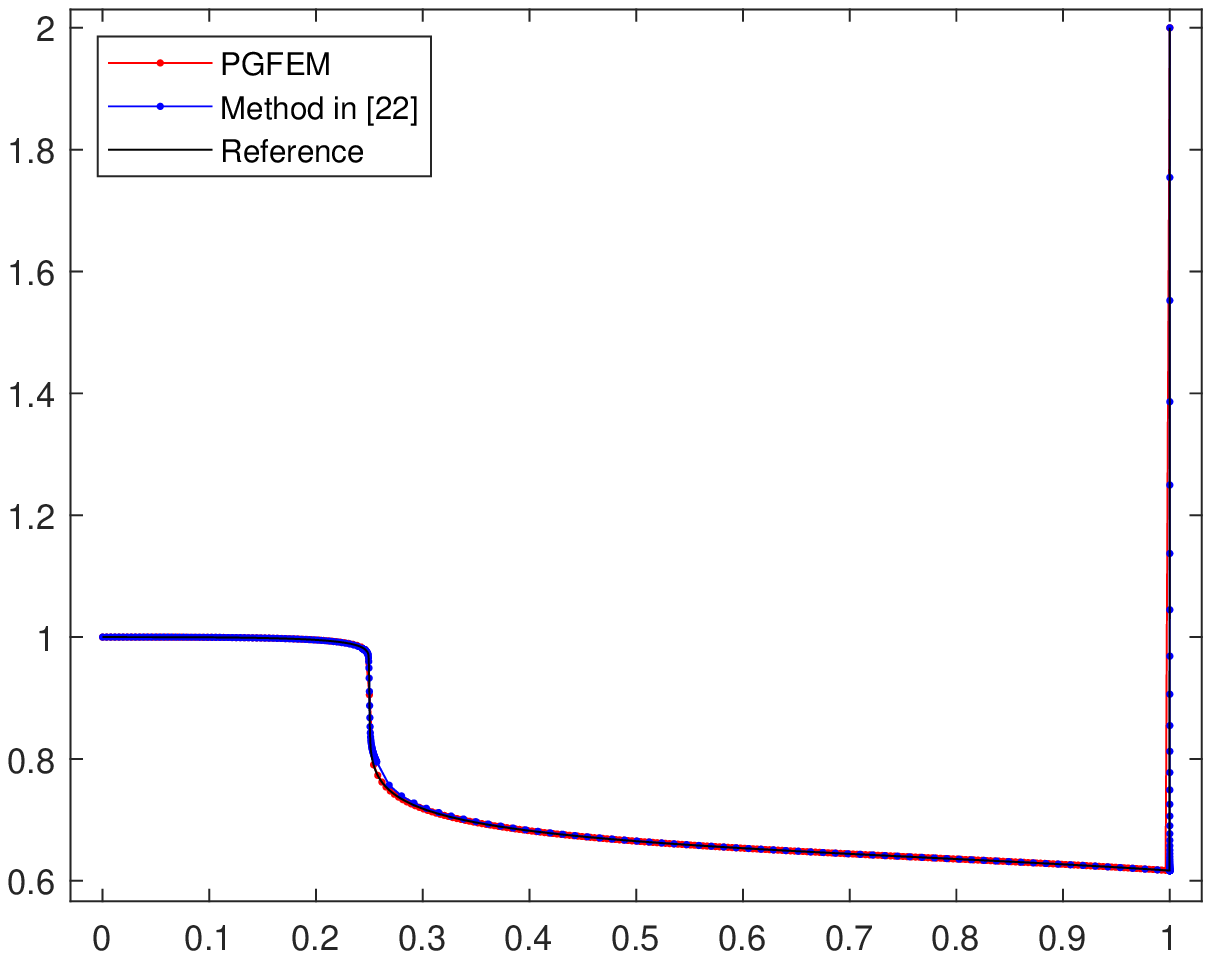}}
			\subfloat[]{\label{fig:1B}
				\centering
				\includegraphics[width=7cm]{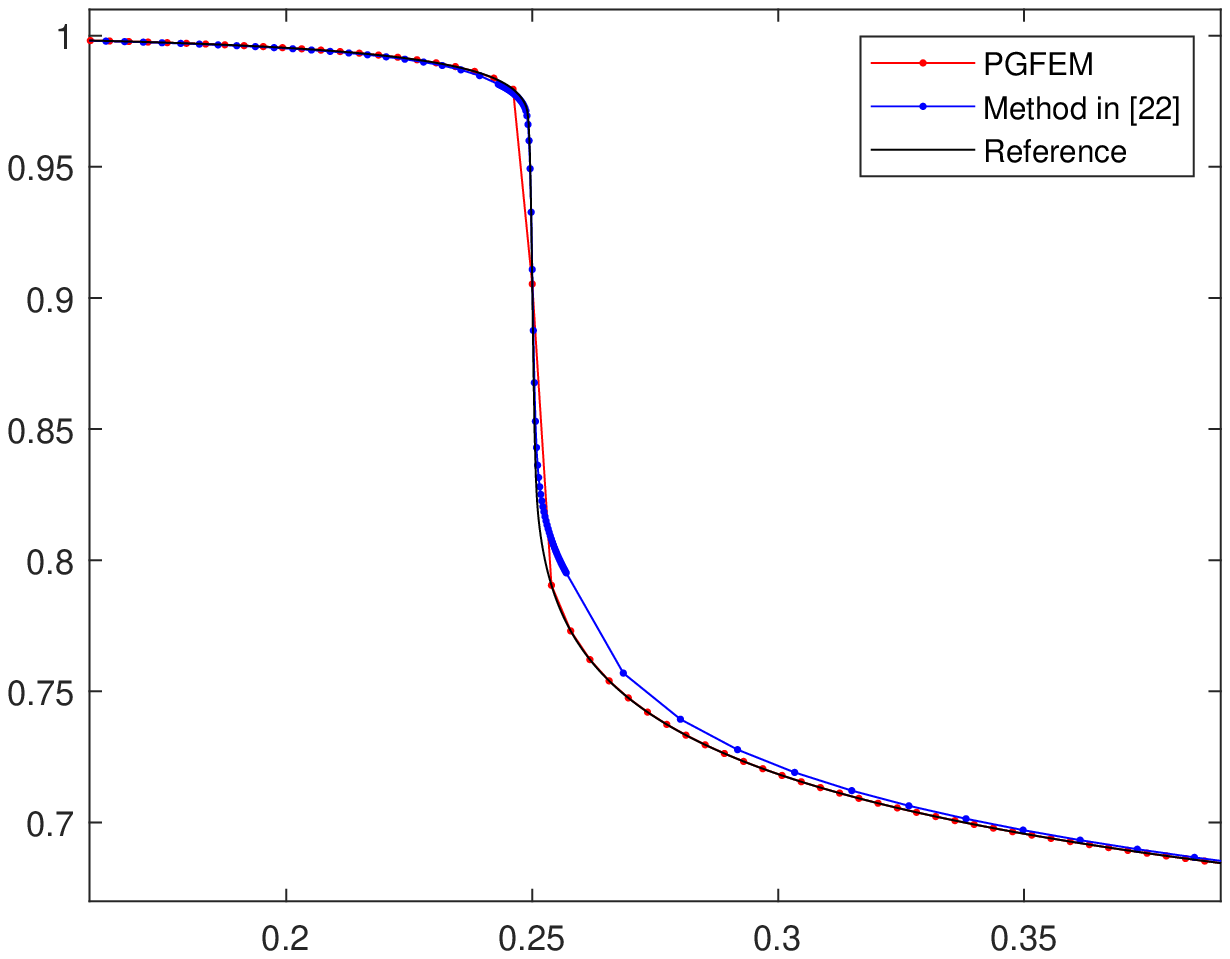}}
			\caption{\small Numerical and reference solutions for Example \ref{ex:cos}
				($\varepsilon=1\times10^{-6}$), where test functions in PGFEM 
				are 
				calculated 
				with 
				exact expressions. \textbf{(a)} PGFEM and the method 
				in \cite{natesan2003parameter} are employed with grid number 
				$N=256$, with the latter using an up-winding scheme on Shishkin 
				mesh. 
				The 
				reference solution is computed with PGFEM on grids 
				$N=4096$; \textbf{(b)} Horizontal Magnification near $x=1/4$.}
			\label{fig_sol_cos}
		\end{figure}
		\begin{figure}[htpb]
			\hspace{-14mm}\includegraphics[width=16cm]{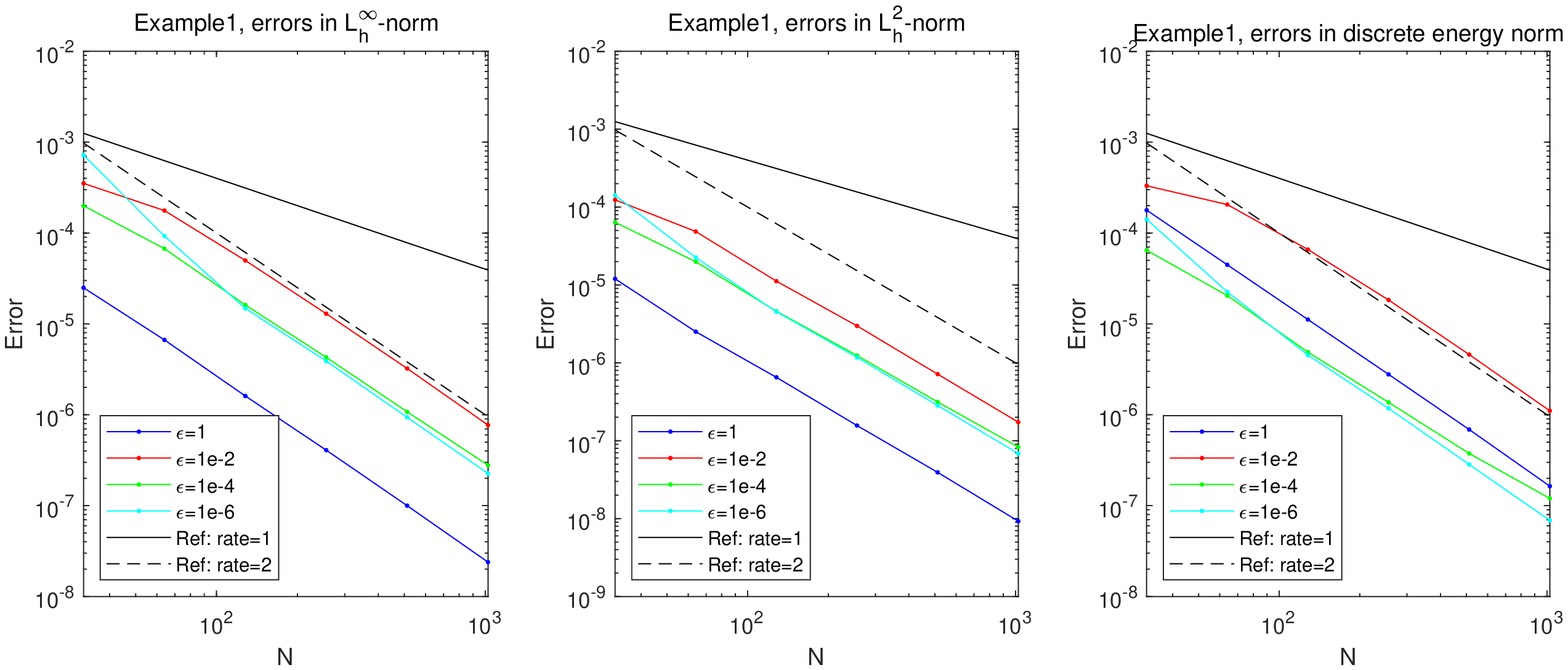}
			\caption{\small Log-log plot for PGFEM errors in Example \ref{ex:cos} versus 
				grid 
				number $N$, in $L^\infty_h,L^2_h$ and discrete energy norm. 
				The solid black line and black dashed 
				line have slopes -1 and -2, respectively.}
			\label{fig_err_cos}
		\end{figure}
	\end{example}
	\begin{example}
		\label{ex:single}
		\rm Consider a boundary turning point problem:
		\begin{displaymath}
			\left\{
			\begin{aligned}
				&-\varepsilon u''+(1-x^2)u'+3u=e^x,\quad -1<x<1,\\
				&u(-1)=1,\quad u(1)=2.\\
			\end{aligned}
			\right.
		\end{displaymath}
		
		At both endpoints the solution appears singular with $p'(-1)>0$ at 
		$x=-1$ and $p'(1)<0$ at $x=1$, corresponding to case (c) with a 
		positive slope and a negative one. These two boundary layers are weaker 
		than those 
		in Example
		\ref{ex:cos}, as the PGFEM solution and the reference solution are 
		drawn 
		in Figure
		\ref{fig_sol_single}. 
		Setting $\{-1,1\}$ as singular points and exact dual solutions as test 
		functions, $L^\infty_h$ errors and discrete 
		energy errors are shown in Tables \ref{table_Linferr_single} and 
		\ref{table_Eerr_single} accordingly, where a 
		first-order 
		uniform convergence could be verified.
		\begin{figure}[h]
			\subfloat[]{\label{fig:2A}
				\centering
				\includegraphics[width=7cm]{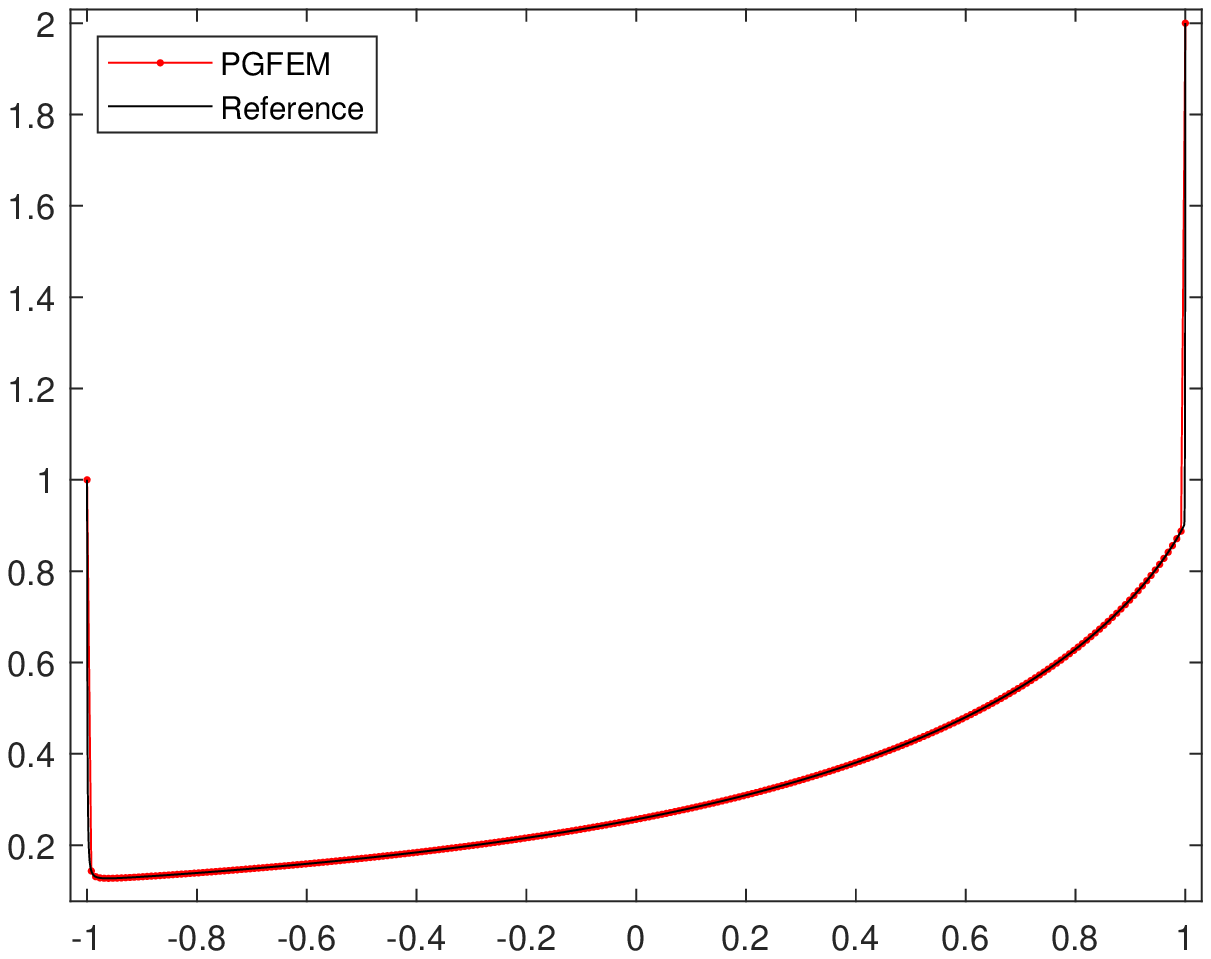}}
			\subfloat[]{\label{fig:2B}
				\centering
				\includegraphics[width=7cm]{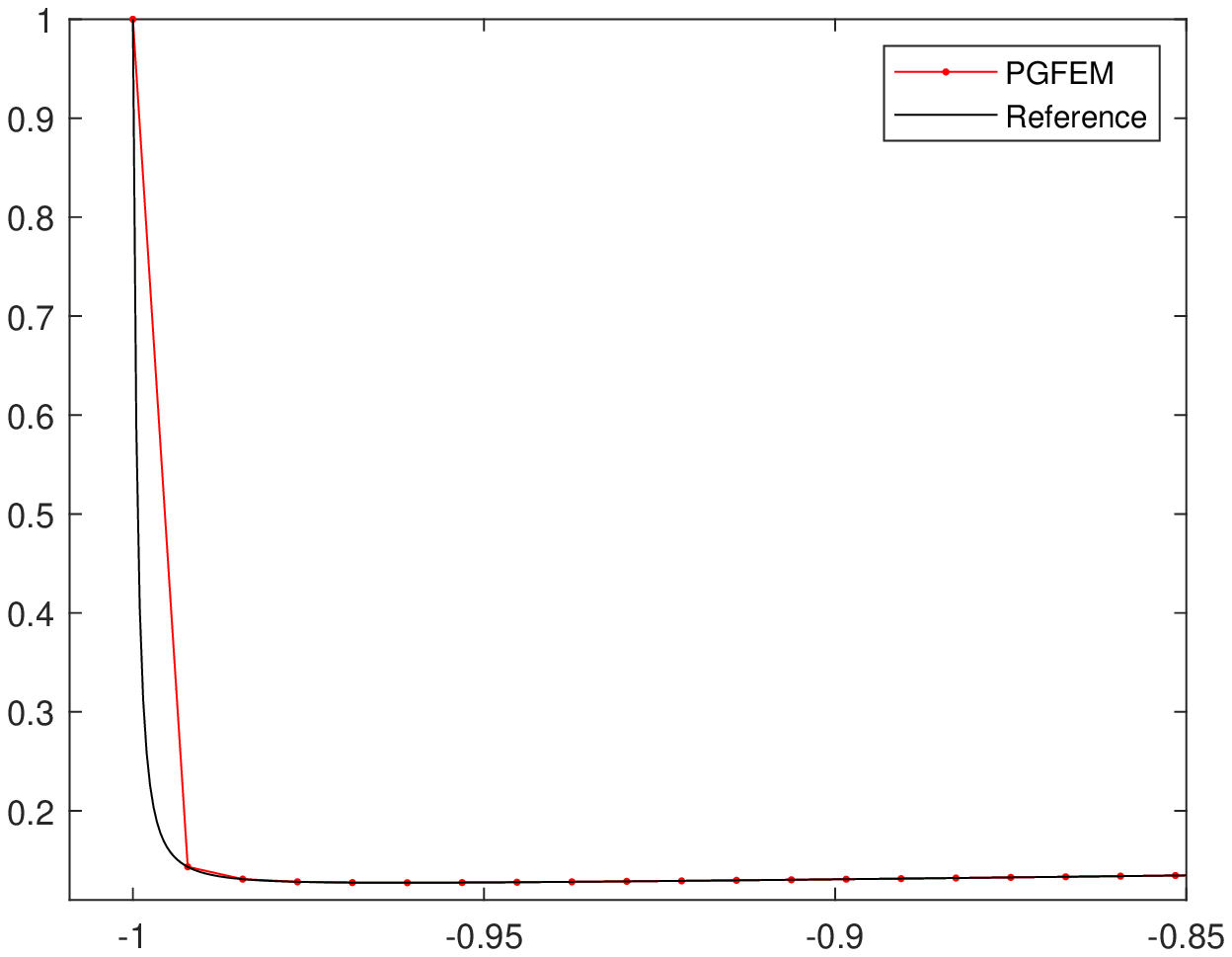}}
			\caption{\small Numerical and reference solutions for Example 
				\ref{ex:single} ($\varepsilon=1\times10^{-6}$). Test functions in PGFEM are 
				calculated 
				with 
				exact expressions. \textbf{(a)} PGFEM is implemented 
				with grids $N=256$, and the reference solution is calculated 
				using the 
				same algorithm with $N=4096$; 
				\textbf{(b)} Horizontal Magnification near $x=0$.}
			\label{fig_sol_single}
		\end{figure}
		\begin{table}[h]
			\caption{\small $L^\infty_h$ errors of PGFEM solutions for 
				Example \ref{ex:single}. Test functions are calculated with exact 
				expressions.}
				\centering
			\begin{tabular}{c|cc|cc|cc|cc}
				\hline
				$\varepsilon$ & \multicolumn{2}{c|}{1} & 
				\multicolumn{2}{c|}{1E-02} & \multicolumn{2}{c|}{1E-04} & 
				\multicolumn{2}{c}{1E-06} \\ \hline
				$N$           & $L_h^\infty$   & rate  & $L_h^\infty$     & 
				rate    & $L_h^\infty$     & rate    & $L_h^\infty$    & 
				rate    \\ \hline
				32            & 1.12E-04       &       & 2.78E-03         
				&         & 1.85E-03         &         & 1.85E-03        
				&         \\
				64            & 2.39E-05       & 2.43  & 1.46E-03         & 
				1.52    & 7.22E-04         & 2.10    & 7.16E-04        & 
				1.93    \\
				128           & 5.97E-06       & 2.00  & 3.72E-04         & 
				1.97    & 1.90E-04         & 1.93    & 1.83E-04        & 
				1.97    \\
				256           & 1.56E-06       & 1.98  & 7.86E-05         & 
				2.37    & 4.49E-05         & 2.26    & 8.66E-05        & 
				2.31    \\
				512           & 3.85E-07       & 2.02  & 1.94E-05         & 
				2.01    & 1.41E-05         & 1.89    & 3.73E-05        & 
				2.45    \\
				1024          & 9.07E-08       & 2.10  & 4.83E-06         & 
				2.04    & 3.33E-06         & 2.08    & 1.51E-05        & 
				2.22    \\ \hline
			\end{tabular}
			\label{table_Linferr_single}
		\end{table}
		
		\begin{table}[h]
			\caption{\small $||\cdot||_{\varepsilon,h} $ errors of 
				PGFEM solutions for Example \ref{ex:single}. Test functions are 
				calculated with exact expressions.}
			\label{table_Eerr_single}
			\centering
			\begin{tabular}{c|cc|cc|cc|cc}
				\hline
				$\varepsilon$ & \multicolumn{2}{c|}{1} & 
				\multicolumn{2}{c|}{1.E-02} & \multicolumn{2}{c|}{1.E-04} & 
				\multicolumn{2}{c}{1.E-06} \\ \hline
				$N$           & energy       & rate    & energy         & 
				rate       & energy         & rate       & energy         & 
				rate      \\ \hline
				32            & 3.93E-04     &         & 1.95E-03       
				&            & 6.38E-04       &            & 6.27E-04       
				&           \\
				64            & 9.56E-05     & 2.24    & 9.93E-04       & 
				1.52       & 2.08E-04       & 1.84       & 2.09E-04       & 
				1.77      \\
				128           & 2.39E-05     & 2.00    & 2.55E-04       & 
				1.97       & 5.31E-05       & 1.99       & 5.50E-05       & 
				2.01      \\
				256           & 6.02E-06     & 2.03    & 5.47E-05       & 
				2.37       & 1.51E-05       & 2.17       & 1.42E-05       & 
				2.18      \\
				512           & 1.49E-06     & 2.02    & 1.35E-05       & 
				2.01       & 4.46E-06       & 2.01       & 3.93E-06       & 
				2.15      \\
				1024          & 3.53E-07     & 2.08    & 3.36E-06       & 
				2.05       & 1.24E-06       & 2.03       & 1.14E-06       & 
				2.13      \\ \hline
			\end{tabular}
		\end{table}		
	\end{example}
	
	For Example \ref{ex:single}, if we compute test functions numerically, 
	using the 
	same reference solution, convergence is the same as above (see 
	Table \ref{table_Linferr_single_app}). Convergency also holds for multiple 
	turning point problems if we follow 
	the same procedure to compute test functions, although it is unclear
	what analytic expressions of the solutions to dual problems are.
	\begin{table}[h]
		\caption{\small $L^\infty_h$ errors of PGFEM solutions for 
			Example \ref{ex:single}. Test functions are approximated by numerical 
			solutions.}
		\label{table_Linferr_single_app}
		\centering
		\begin{tabular}{c|cc|cc|cc|cc}
			\hline
			$\varepsilon$ & \multicolumn{2}{c|}{1} & 
			\multicolumn{2}{c|}{1.E-02} & \multicolumn{2}{c|}{1.E-04} & 
			\multicolumn{2}{c}{1.E-06} \\ \hline
			$N$           & $L_h^\infty$   & rate  & $L_h^\infty$     & 
			rate     & $L_h^\infty$     & rate     & $L_h^\infty$     & rate    
			\\ \hline
			32            & 1.12E-04       &       & 2.78E-03         
			&          & 1.86E-03         &          & 1.86E-03         
			&         \\
			64            & 2.39E-05       & 2.43  & 1.46E-03         & 
			1.52     & 7.23E-04         & 2.11     & 7.21E-04         & 1.94    
			\\
			128           & 5.96E-06       & 2.00  & 3.72E-04         & 
			1.97     & 1.90E-04         & 1.93     & 1.84E-04         & 1.97    
			\\
			256           & 1.56E-06       & 1.98  & 7.86E-05         & 
			2.37     & 4.50E-05         & 2.26     & 8.66E-05         & 2.32    
			\\
			512           & 3.85E-07       & 2.02  & 1.94E-05         & 
			2.01     & 1.40E-05         & 1.89     & 3.68E-05         & 2.45    
			\\
			1024          & 9.07E-08       & 2.10  & 4.83E-06         & 
			2.04     & 3.33E-06         & 2.08     & 1.51E-05         & 2.29    
			\\ \hline
		\end{tabular}
	\end{table}	
	\begin{example}
		\rm\label{ex:multiple}
		Consider the following multiple boundary turning point problem 
		in\cite{vulanovic1993continuous}:
		\begin{displaymath}
			\left\{
			\begin{aligned}
				& -\varepsilon u''-x^3u'+u=f(x),\quad 0<x<1,\\
				& u(0)=2,\quad u(1)=e^{-1/\sqrt{\varepsilon}}+e,\\
			\end{aligned}
			\right.
		\end{displaymath}
		where $f(x)$ is determined by the 
		exact solution: $$u(x)=e^{-x/\sqrt{\varepsilon}}+e^x.$$
		
		There is a $\sqrt{\varepsilon}$-wide boundary layer at the turning 
		point $x=0$, as drawn in Figure \ref{fig_sol_multiple}. Results of 
		PGFEM 
		are 
		compared with 
		one in
		\cite{vulanovic1993continuous}, where we compute with two methods on 
		the same uniform mesh, and PGFEM obtains solutions with higher 
		precision. In this 
		case, analytic expressions of dual solutions are unknown. Thus we 
		utilize numerical solutions to dual problems as test functions.
		The $L^\infty_h$ convergence rate of PGFEM is almost two, as shown in 
		Table \ref{table_Linferr_multiple}.
		\begin{figure}[h]
			\subfloat[]{\label{fig:3A}
				\centering
				\includegraphics[width=7cm]{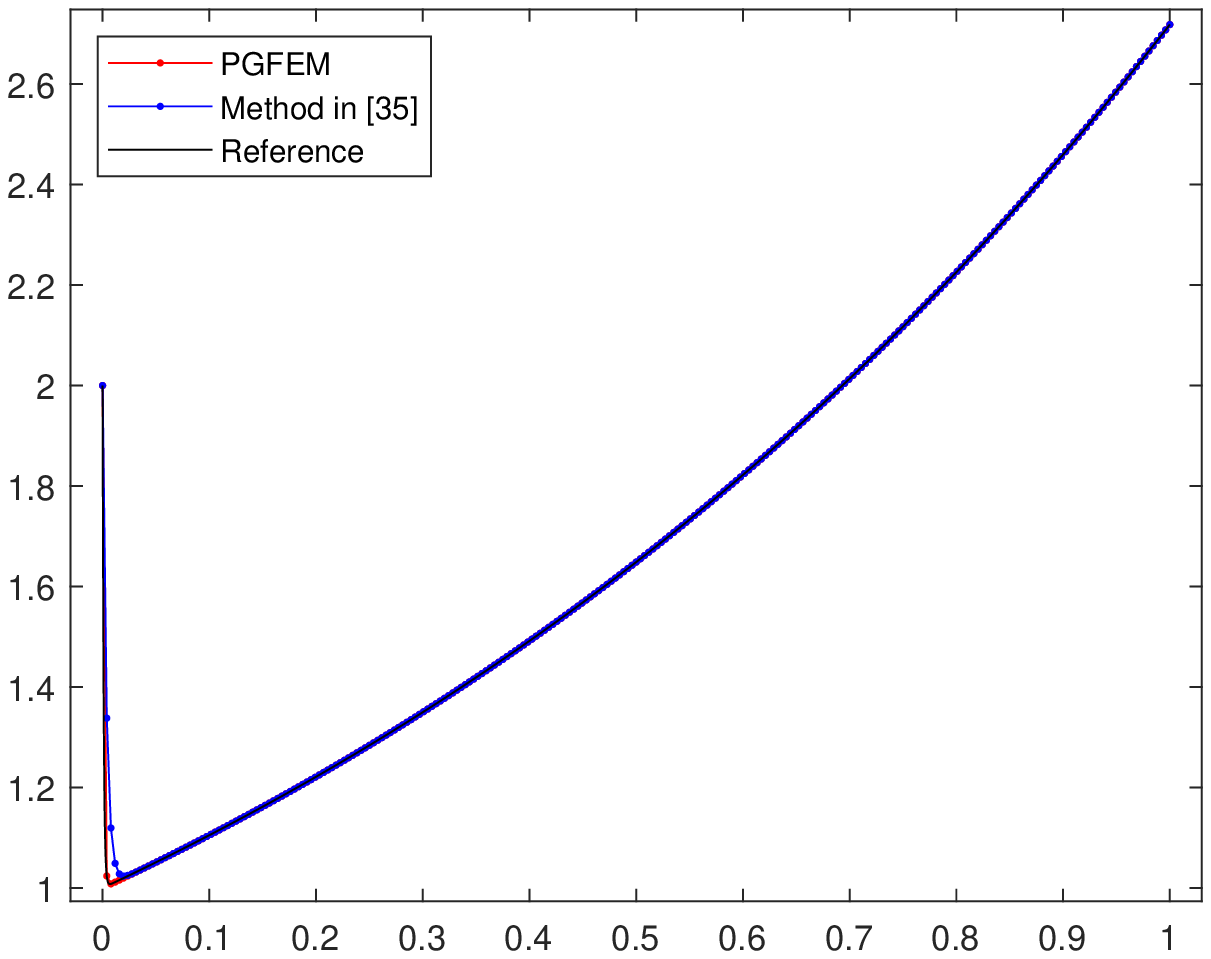}}
			\subfloat[]{\label{fig:3B}
				\centering
				\includegraphics[width=7cm]{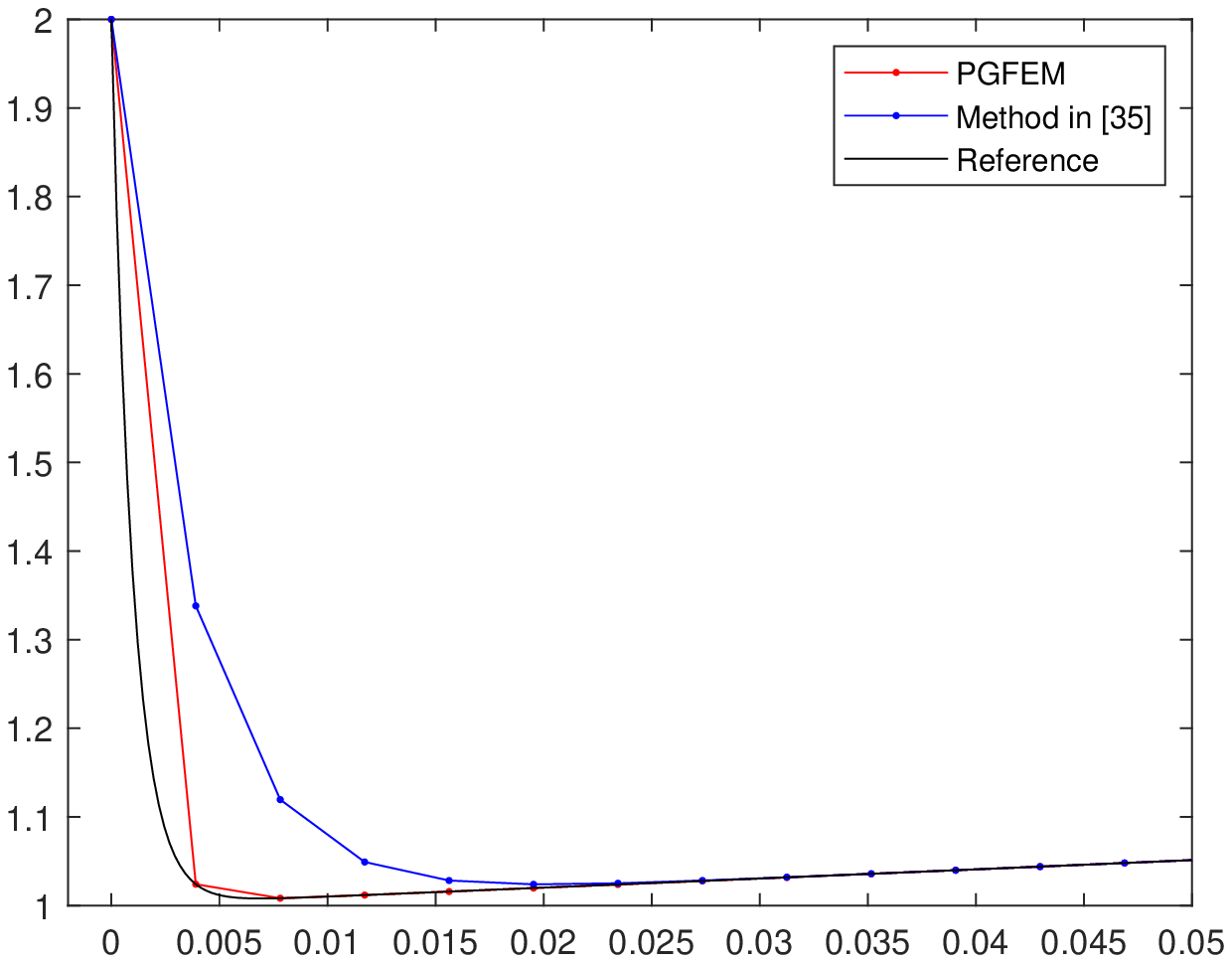}}
			\caption{\small Numerical and exact solutions for Example \ref{ex:multiple} 
				($\varepsilon=1\times10^{-6}$). Test functions in PGFEM are 
				approximated 
				by numerical solutions on grids $N=4096$.	\textbf{(a)} PGFEM 
				and the method 
				in \cite{vulanovic1993continuous} 
				are manipulated with uniform grids $N=256$, and the reference 
				solution is computed with PGFEM and
				$N=4096$; 
				\textbf{(b)} Horizontal Magnification near $x=0$.}
			\label{fig_sol_multiple}
		\end{figure}
		\begin{table}[h]
			\caption{\small $L^\infty_h$ errors of PGFEM solutions for 
				Example \ref{ex:multiple}. Test functions are approximated by numerical 
				solutions.}
			\label{table_Linferr_multiple}
			\centering
			\begin{tabular}{c|cc|cc|cc|cc}
				\hline
				$\varepsilon$ & \multicolumn{2}{c|}{1} & 
				\multicolumn{2}{c|}{1.E-02} & \multicolumn{2}{c|}{1.E-04} & 
				\multicolumn{2}{c}{1.E-06} \\ \hline
				$N$           & $L_h^\infty$   & rate  & $L_h^\infty$     & 
				rate     & $L_h^\infty$     & rate     & $L_h^\infty$     & 
				rate    \\ \hline
				32            & 1.84E-05       &       & 1.65E-04         
				&          & 3.71E-04         &          & 1.05E-03         
				&         \\
				64            & 4.61E-06       & 2.00  & 4.82E-05         & 
				1.77     & 5.89E-05         & 2.66     & 3.01E-04         & 
				1.81    \\
				128           & 1.15E-06       & 2.00  & 1.26E-05         & 
				1.93     & 8.85E-06         & 2.82     & 7.64E-05         & 
				1.98    \\
				256           & 2.88E-07       & 2.00  & 3.20E-06         & 
				1.98     & 2.22E-06         & 1.99     & 1.77E-05         & 
				2.11    \\
				512           & 7.21E-08       & 2.00  & 8.02E-07         & 
				2.00     & 5.50E-07         & 2.01     & 4.57E-06         & 
				2.31    \\
				1024          & 1.79E-08       & 2.01  & 2.01E-07         & 
				2.00     & 1.33E-07         & 2.04     & 1.17E-06         & 
				1.97    \\ \hline
			\end{tabular}
		\end{table}			
	\end{example}
	\section{Conclusion}
	In this paper, we develop a Petrov-Galerkin finite element method (PGFEM) 
	to solve a class of turning point problems 
	in one dimension. Priori estimates have been established for the single 
	boundary 
	turning point case. Numerical analysis shows that our scheme has
	first-order uniform convergency in several different norms. In numerical 
	examples, errors in different discrete norms validate the feasibility and 
	efficiency of the scheme. We emphasize that such an algorithm not only 
	could be implemented with evaluations of exact solutions to the dual 
	problems but also is considerable if test functions are approximated 
	numerically.
	
\end{document}